\theoremstyle{definition}
\newtheorem{defn}{Definition}[section]
\newtheorem{fact}[defn]{Fact}
\newtheorem{prop}[defn]{Proposition}
\newtheorem{thm}[defn]{Theorem}
\newtheorem{question}[defn]{Question}
\newtheorem{remark}[defn]{Remark}
\newtheorem{claim}[defn]{Claim}
\title[Distinguishing colorings, proper colorings, and covering properties without AC]{Distinguishing colorings, proper colorings, and covering properties without the Axiom of Choice}
\author{Amitayu Banerjee}
\address{Alfr\'ed R\'enyi Institute of Mathematics, Reáltanoda utca 13-15, 1053, Budapest, Hungary}
\email[Corresponding author]{banerjee.amitayu@gmail.com}
\author{Zal\'{a}n Moln\'{a}r}
\address{E\"otv\"os Lor\'and University, Department of Logic, M\'{u}zeum krt. 4, 1088, Budapest, Hungary}
\email{mozaag@gmail.com}
\author{Alexa Gopaulsingh}
\address{E\"otv\"os Lor\'and University, Department of Logic, M\'{u}zeum krt. 4, 1088, Budapest, Hungary}
\email{alexa279e@gmail.com}
\date{}
\subjclass[2020]{Primary 03E25; Secondary 05C63, 05C15, 05C69.}
\keywords{Axiom of Choice, proper colorings, distinguishing colorings,  minimal edge cover, maximal matching, minimal dominating set}
\begin{document}
	\begin{abstract}
		We work with simple graphs in $\mathsf{ZF}$ (i.e., the Zermelo–Fraenkel set theory without the Axiom of Choice ($\mathsf{AC}$)) and 
		assume that the 
		sets of colors can be either well-orderable or non-well-orderable,
		to prove that the following statements are equivalent to K\H{o}nig’s Lemma:
		\begin{enumerate}[(a)]
			\item Any infinite locally finite connected graph $G$ such that the minimum degree of $G$ is greater than $k$, has a chromatic number for any fixed integer $k$ greater than or equal to 2.
			\item Any infinite locally finite connected graph has a chromatic index.
			\item Any infinite locally finite connected graph has a distinguishing number.
			\item Any infinite locally finite connected graph has a distinguishing index.
		\end{enumerate}
		The above results strengthen some recent results of Stawiski since he assumed that the 
		sets of colors can be well-ordered. 
		We also formulate new conditions for the existence of irreducible proper coloring, minimal edge cover, maximal matching, and minimal dominating set in connected bipartite graphs and locally finite connected graphs, which are either equivalent to $\mathsf{AC}$ or K\H{o}nig’s Lemma. Moreover, we show that if the Axiom of Choice for families of 2-element sets holds, then the Shelah-Soifer graph has a minimal dominating set.
	\end{abstract}
	\maketitle
	\section{Introduction}
	In 1991, Galvin--Komj\'{a}th proved that the statements ``Any graph has a chromatic number" and ``Any graph has an irreducible proper coloring" are equivalent to $\mathsf{AC}$ in $\mathsf{ZF}$ using Hartogs's theorem (cf. \cite{GK1991}). 
	In 1977, Babai \cite{Bab1977} introduced distinguishing vertex colorings under the name asymmetric colorings, and distinguishing edge colorings were introduced by Kalinowski--Pil\'{s}niak \cite{KP2015} in 2015.
	Recently, Stawiski \cite{Sta2023} proved that the statements (b)--(d) mentioned in the abstract above and the statement ``Any infinite locally finite connected graph has a chromatic number" are equivalent to K\H{o}nig's Lemma (a weak form of $\mathsf{AC}$) by assuming that the 
	sets of colors can be well-ordered (cf. \cite[Lemma 3.3 and section 2]{Sta2023}).
	
	%cardinality of color classes are well-ordered cardinals i.e., cardinals which are ordinals as well (cf. \cite[Lemma 3.3]{Sta2023}). In particular, by a cardinal, Stawiski meant an ordinal that is not equinumerous with any smaller ordinal (cf. \cite[section 2]{Sta2023}), which is a definition in $\mathsf{ZFC}$.
	
	\subsection{Proper and distinguishing colorings}
	%In the absence of $\mathsf{AC}$, a set $m$ is called a cardinal if it is the cardinality $\vert x\vert$ of some set $x$, where $\vert x\vert$ is the set $\{y: \vert y\vert = \vert x\vert$ and $y$ is of least rank$\}$; see \cite[Section 11.2]{Jec1973}. 
	An infinite cardinal in $\mathsf{ZF}$ can either be an ordinal or a set that is not well-orderable. 
	Herrlich--Tachtsis \cite[Proposition 23]{HT2006} proved that no Russell graph has a chromatic number in $\mathsf{ZF}$. We refer the reader to
	\cite{HT2006} for the details concerning Russell graph and Russell sequence.
	In Theorem 4.2, the first and the second authors study new combinatorial proofs (mainly inspired by the arguments of \cite[Proposition 23]{HT2006}) to show that the statements (a)--(d) mentioned in the abstract above are equivalent to K\H{o}nig's Lemma (without assuming that the sets of colors can be well-ordered).\footnote{We note that statement (a) mentioned in the abstract is a new equivalent of K\H{o}nig's Lemma. Stawiski's graph from \cite[Theorem 3.6]{Sta2023} shows that K\H{o}nig's Lemma is equivalent to ``Every infinite locally finite connected graph $G$ such that $\delta(G)$ (the minimum degree of $G$) is 2 has a chromatic number".}
	
	\subsection{New equivalents of K\H{o}nig's lemma and AC} 
	The role of $\mathsf{AC}$ and K\H{o}nig’s Lemma in the existence of graph-theoretic properties like irreducible proper coloring, chromatic numbers, maximal independent sets, spanning trees, and distinguishing colorings were studied by several authors in the past (cf. \cite{Ban2023, Ban, DM2006, Fri2011, HH1973, GK1991, Spa2014, Sta2023}). We list a few known results apart from the above-mentioned results due to Galvin--Komj\'{a}th \cite{GK1991} and Stawiski \cite{Sta2023}. In particular, Friedman \cite[Theorem 6.3.2, Theorem 2.4]{Fri2011} proved
	that $\mathsf{AC}$ is equivalent to the statement ``Any graph has a maximal independent set". 
	H\"{o}ft--Howard \cite{HH1973} proved that the statement ``Any connected graph contains a partial subgraph which is a tree" is equivalent to $\mathsf{AC}$. Fix any even integer $m \geq 4$ and any integer $n\geq 2$. Delhomm\'{e}--Morillon \cite{DM2006} studied the role of $\mathsf{AC}$ in the existence of spanning subgraphs and observed that $\mathsf{AC}$ is equivalent to ``Any connected bipartite graph has a spanning subgraph without a complete bipartite subgraph $K_{n,n}$" as well as ``Any connected graph admits a spanning $m$-bush" (cf. \cite[Corollary 1, Remark 1]{DM2006}). They also proved that the statement ``Any locally finite connected graph has a spanning tree" is equivalent to K\H{o}nig's lemma in \cite[Theorem 2]{DM2006}. Banerjee \cite{Ban2023, Ban} observed that the statements ``Any infinite locally finite connected graph has a maximal independent set" and ``Any infinite locally finite connected graph has a spanning $m$-bush" are equivalent to K\H{o}nig's lemma. 
	However, the existence of maximal matching, minimal edge cover, and minimal dominating set in $\mathsf{ZF}$ were not previously investigated. The following table summarizes the new results (cf. Theorem 5.1, Theorem 6.4).\footnote{We note that Theorem 5.1 is a combined effort of the first and the second authors. Moreover, all remarks in Section 6 including Theorem 6.4 are due to all the authors.}
	\begin{center}
		\begin{tabular}{|l|l|l| } 
			\hline New equivalents of K\H{o}nig's lemma & New equivalents of $\mathsf{AC}$ \\ 
			\hline
			$\mathcal{P}_{lf,c}$(irreducible proper coloring) (Theorem 5.1) & \\ 
			$\mathcal{P}_{lf,c}$(minimal dominating set) (Theorem 5.1) & $\mathcal{P}_{c}$(minimal dominating set) (Theorem 6.4)
			\\  
			$\mathcal{P}_{lf,c}$(maximal matching) (Theorem 5.1) & $\mathcal{P}_{c,b}$(maximal matching) (Theorem 6.4) 
			\\
			$\mathcal{P}_{lf,c}$(minimal edge cover) (Theorem 5.1) & $\mathcal{P}_{c,b}$(minimal edge cover) (Theorem 6.4) \\
			\hline
		\end{tabular}
	\end{center}
	
	In the table, $\mathcal{P}_{lf,c}$(property $X$) denotes ``Any infinite locally finite connected graph has property $X$", $\mathcal{P}_{c,b}$(property $X$)
	denotes ``Any connected bipartite graph has property $X$" and $\mathcal{P}_{c}$(property $X$)
	denotes ``Any connected graph has property $X$".

	\section{Basics}
	\begin{defn}
		Suppose $X$ and $Y$ are two sets. We write:
		
		\begin{enumerate}
			\item $X \preceq Y$, if there is an injection $f : X \rightarrow Y$.
			\item $X$ and $Y$ are equipotent if $X \preceq Y$ and $Y \preceq X$, i.e., if there is a bijection $f : X \rightarrow Y$.
			\item $X \prec Y$, if $X \preceq Y$ and $X$ is not equipotent with $Y$.
		\end{enumerate}
	\end{defn}
	
	\begin{defn}
		Without $\mathsf{AC}$, a set $m$ is called a {\em cardinal} if it is the cardinality $\vert x\vert$ of some set $x$, where 
		$\vert x\vert$ = $\{y : y \sim x$ and $y$ is of least rank$\}$ where $y \sim x$ means the
		existence of a bijection $f : y \rightarrow x$ 
		(see \cite[Definition 2.2, p. 83]{Lev2002} and \cite[Section 11.2]{Jec1973}).
	\end{defn}
	
	\begin{defn}
		A graph $G=(V_{G}, E_{G})$ consists of a set $V_{G}$ of vertices and a set $E_{G}\subseteq [V_{G}]^{2}$ of edges.\footnote{i.e., $E_{G}$ is a subset of the set of all two-element subsets of $V_{G}$.} Two vertices $x, y \in V_{G}$ are {\em adjacent vertices} if $\{x, y\} \in E_{G}$, and two edges $e,f\in E_{G}$ are {\em adjacent edges} if they share a common vertex. 
		The {\em degree} of a vertex $v\in V_{G}$, denoted by $deg(v)$, is the number of edges emerging from $v$. We denote by $\delta(G)$ the minimum degree of $G$. Given a non-negative integer $n$, a {\em path of length $n$} in $G$ is a one-to-one finite sequence $\{x_{i}\}_{0\leq i \leq n}$ of vertices such that for each $i < n$, $\{x_{i}, x_{i+1}\} \in E_{G}$; such a path joins $x_{0}$ to $x_{n}$. 
		\begin{enumerate}
			\item $G$ is {\em locally finite} if every vertex of $G$ has a finite degree.
			\item $G$ is {\em connected} if any two vertices are joined by a path of finite length.
			\item A {\em dominating set} of $G$ is a set $D$ of vertices of $G$, such that any vertex of $G$ is either in $D$, or has a neighbor in $D$. 
			
			\item An {\em independent set} of $G$
			is a set of vertices of $G$, no two of which are adjacent vertices. A {\em dependent set} of $G$ is a set of vertices of $G$ that is not an independent set.
			
			\item  A {\em vertex cover} of $G$ is a set of vertices of $G$ that includes at least one endpoint of every edge of the graph $G$.
			
			\item A {\em matching} $M$ in $G$ is a set of pairwise non-adjacent edges. 
			
			\item An {\em edge cover} of $G$ is a set $C$ of edges such that each vertex in $G$ is incident with at least one edge in $C$.
			
			\item   A {\em minimal dominating set (minimal vertex cover, minimal edge cover)} is a dominating set (a vertex cover, an edge cover) that is not a superset of any other dominating set (vertex cover, edge cover).  A {\em maximal independent set (maximal matching)} is an independent set (a matching) that is not a subset of any other independent set (matching).
			
			\item A {\em proper vertex coloring} of  $G$ with a color set $C$ is a mapping $f:V_{G}\rightarrow C$ such that for every $\{x,y\}\in E_{G}$, $f(x)\not= f(y)$. A {\em proper edge coloring} of $G$ with a color set $C$ is a mapping $f:E_{G}\rightarrow C$ such that for any two adjacent edges $e_{1}$ and $e_{2}$, $f(e_{1})\not= f(e_{2})$.

			\item Let $\vert C\vert =\kappa$. We say $G$ is {\em $\kappa$-proper vertex colorable} or {\em $C$-proper vertex colorable} if there is a proper vertex coloring $f: V_{G} \rightarrow C$ and $G$ is {\em $\kappa$-proper edge colorable} or {\em $C$-proper edge colorable} if there is a proper edge coloring $f: E_{G} \rightarrow C$. The least cardinal $\kappa$ for which $G$ is $\kappa$-proper vertex colorable (if it exists) is the {\em chromatic number} of $G$ and the least cardinal $\kappa$  for which $G$ is $\kappa$-proper edge colorable (if it exists) is the {\em chromatic index} of $G$. 
			
			\item A proper vertex coloring $f: V_{G} \rightarrow C$ is a {\em $C$-irreducible proper coloring} if $f^{-1}(c_{1})\cup f^{-1}(c_{2})$ is a dependent set whenever $c_{1}, c_{2}\in C$ and $c_{1} \not= c_{2}$ (cf. \cite{GK1991}). 
			
			\item An automorphism of $G$ is a bijection $\phi: V_{G}\rightarrow V_{G}$ such that $\{u,v\}\in E_{G}$ if and only if $\{\phi(u),\phi(v)\}\in E_{G}$. 
			Let $f$ be an assignment of colors to either vertices or edges of $G$. 
			We say that an automorphism $\phi$ of $G$ {\em preserves $f$} if each vertex of $G$ is mapped to a vertex of the same color or each edge of $G$ is mapped to an edge of the same color.
			We say that $f$ is a {\em distinguishing coloring} if the only automorphism that preserves $f$ is the identity. Let $\vert C\vert =\kappa$. We say $G$ is {\em $\kappa$-distinguishing vertex colorable} or {\em $C$-distinguishing vertex colorable} if there is a distinguishing vertex coloring $f: V_{G} \rightarrow C$ and $G$ is {\em $\kappa$-distinguishing edge colorable} or {\em $C$-distinguishing edge colorable} if there is a distinguishing edge coloring $f: E_{G} \rightarrow C$. The least cardinal $\kappa$ for which $G$ is $\kappa$-distinguishing vertex colorable (if it exists) is the {\em distinguishing number} of $G$ and the least cardinal $\kappa$ for which $G$ is $\kappa$-distinguishing edge colorable (if it exists) is the {\em distinguishing index} of $G$. 
			
			\item The automorphism group of $G$, denoted by $Aut(G)$, is the group consisting of automorphisms of $G$ with composition as the operation. Let $\tau$ be a group acting on a set $S$ and let $a\in S$. The orbit of $a$, denoted by $Orb_{\tau}(a)$, is the set $\{\phi(a) : \phi \in \tau\}$.
			
			\item $G$ is {\em complete} if each pair of vertices is connected by an edge. We denote by $K_{n}$, the complete graph on $n$ vertices for any natural number $n\geq 1$.
			
			\item {\em K\H{o}nig’s Lemma} states that every infinite locally finite connected graph has a ray.
		\end{enumerate}
	\end{defn}
	
	Let $\omega$ be the set of natural numbers, $\mathbb{Z}$ be the set of integers, $\mathbb{Q}$ be the set of rational numbers, $\mathbb{R}$ be the set of real numbers, and $\mathbb{Q}+a = \{a + r: r \in \mathbb{Q}\}$ for any $a \in \mathbb{R}$.
	Shelah--Soifer \cite{SS2003} constructed a graph whose chromatic number is 2 in $\mathsf{ZFC}$ and uncountable in some model of $\mathsf{ZF}$ (e.g. in Solovay's model from \cite[Theorem 1]{Sol1970}).
	
	\begin{defn}{(cf. \cite{SS2003})} The {\em Shelah--Soifer Graph} $G = (\mathbb{R},\rho)$ is defined by
		$x\rho y \Leftrightarrow (x-y) \in (\mathbb{Q}+\sqrt{2}) \cup (\mathbb{Q}+ (-\sqrt{2}))$.    
	\end{defn}

	\begin{defn}
		A set $X$ is {\em Dedekind-finite} if it satisfies the following equivalent conditions (cf. \cite[Definition 1]{HT2006}):
		\begin{itemize}
			\item $\omega\not\preceq X$,\footnote{i.e., there is no injection $f:\omega\rightarrow X$.}
			\item  $A \prec X$ for every proper subset $A$ of $X$.
		\end{itemize}
	\end{defn}
	
	\begin{defn}
		For every family $\mathcal{B}=\{B_{i}:i\in I\}$ of non-empty sets, $\mathcal{B}$ is said to have a {\em partial choice function} if  $\mathcal{B}$ has an infinite
		subfamily $\mathcal{C}$ with a choice function.
	\end{defn}
	
	\begin{defn}{(A list of choice forms).}
		
		\begin{enumerate}
			\item $\mathsf{AC_{2}}$: Every family of 2-element sets has a choice function.
			\item $\mathsf{AC_{fin}}$: Every family of non-empty finite sets has a choice function. 
			
			\item $\mathsf{AC_{fin}^{\omega}}$: Every countably infinite family of non-empty finite sets has a choice function. We recall that $\mathsf{AC_{fin}^{\omega}}$ is equivalent to
			K\H{o}nig’s Lemma as well as the statement ``The union of a countable
			family of finite sets is countable".
			\item $\mathsf{AC^{\omega}_{\text{$k$}\times fin}}$ for $k\in \omega\backslash\{0,1\}$: Every countably infinite family $\mathcal{A}=\{A_{i}:i\in\omega\}$ of non-empty finite sets, where $k$ divides $\vert A_{i} \vert$, has a choice function.
			
			\item $\mathsf{PAC^{\omega}_{\text{$k$}\times fin}}$ for $k\in \omega\backslash\{0,1\}$: Every countably infinite family $\mathcal{A}=\{A_{i}:i\in\omega\}$ of non-empty finite sets, where $k$ divides $\vert A_{i} \vert$ has a partial choice function. 
			
		\end{enumerate}
	\end{defn}
	\begin{defn}
		From the point of view of model theory, the {\em language of graphs $\mathcal{L}$} consists of a single binary relational symbol $E$ depicting edges, i.e., $\mathcal{L}=\{E\}$ and a graph is an $\mathcal{L}$-structure $G=\langle V, E\rangle$ consisting of a non-empty set $V$ of vertices and the edge relation $E$ on $V$. 
		Let $G=\langle V, E\rangle$ be an $\mathcal{L}$-structure, $\phi(x_{1},..., x_{n})$ be a first-order $\mathcal{L}$-formula, and let $a_1,...,a_n \in V$ for some $n\in\omega\backslash\{0\}$.
		We write
		$G \models \phi(a_1,...,a_n)$,
		if the property expressed by $\phi$ is true in $G$ for $a_1,...,a_n$.
		Let $G_{1}=\langle V_{G_{1}}, E_{G_{1}}\rangle$ and $G_{2}=\langle V_{G_{2}}, E_{G_{2}}\rangle$ be two $\mathcal{L}$-structures. We recall that if $j: V_{G_{1}}\to V_{G_{2}}$ is an isomorphism, $\varphi(x_{1},..., x_{r})$ is a first-order $\mathcal{L}$-formula on $r$ variables for some $r\in\omega\backslash\{0\}$, and $a_{i}\in V_{G_{1}}$ for each $1\leq i\leq r$, then by induction on the complexity of formulae, one can see that $G_{1}\models\varphi(a_{1},..., a_{r})$ if and only if $G_{2}\models\varphi(j(a_{1}),..., j(a_{r}))$ (cf. \cite[Theorem 1.1.10]{Mar2002}). 
	\end{defn}
	
	\section{Known and Basic Results}
	\subsection{Known Results}
	\begin{fact} {($\mathsf{ZF}$)}
		{\em The following hold:
			\begin{enumerate}
				\item (Galvin--Komj\'{a}th; cf. \cite[Lemma 3 and the proof of Lemma 2]{GK1991}) Any graph based on a well-ordered set of vertices has an irreducible proper coloring and a chromatic number.
				
				\item {(Delhomm\'{e}--Morillon; cf. \cite[Lemma 1]{DM2006})
					Given a set $X$ and a set $A$ which is the range of no mapping with domain $X$, consider a mapping $f: A \rightarrow \mathcal{P}(X)\backslash \{\emptyset\}$ (with values non-empty subsets of $X$). Then there are distinct $a$ and $b$ in $A$ such that $f(a)\cap f(b) \neq \emptyset$.
					
					\item {(Herrlich--Rhineghost; cf. \cite[Theorem]{HR2005})} For any measurable subset $X$ of $\mathbb{R}$ with a positive measure there exist $x \in X$ and $y \in X$ with $y-x \in \mathbb{Q}+ \sqrt{2}$.
					\item (Stawiski; cf. \cite[proof of Theorem 3.8]{Sta2023}) Any graph based on a well-ordered set of vertices has a chromatic index, a distinguishing number, and a distinguishing index.
				}
			\end{enumerate}
		}
	\end{fact}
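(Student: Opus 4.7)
The plan is to treat the four items of Fact 3.1 separately, since each uses distinct machinery.

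For (1), I would fix a well-ordering of $V_G$ as $\{v_\alpha : \alpha < \kappa\}$ and define a coloring $c$ by transfinite recursion, setting $c(v_\alpha)$ to be the least ordinal not appearing in $\{c(v_\beta) : \beta < \alpha,\ \{v_\alpha, v_\beta\} \in E_G\}$. This yields a proper coloring whose image is a set of ordinals, so $G$ admits a proper coloring by a well-orderable color set. The chromatic number then exists: any candidate cardinal $\lambda$ satisfying $\lambda \preceq \alpha$ for such an ordinal $\alpha$ must itself be well-orderable, so the competitors reduce to alephs, which are linearly ordered, and a least one exists. Moreover, this greedy coloring is automatically irreducible: whenever $c_1 < c_2$ are both used and $c(v_\alpha) = c_2$, the minimality in the recursive rule forces some $\beta < \alpha$ with $v_\beta$ adjacent to $v_\alpha$ and $c(v_\beta) = c_1$, so $c^{-1}(c_1) \cup c^{-1}(c_2)$ contains the edge $\{v_\beta, v_\alpha\}$ and is dependent.

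For (4), since $E_G \subseteq [V_G]^2$ is well-orderable whenever $V_G$ is, I would apply the same greedy recursion to the line graph of $G$ to obtain a proper edge coloring by ordinals, yielding a chromatic index by the argument of (1). For distinguishing colorings, the identity map $c : V_G \to V_G$ (respectively, $c : E_G \to E_G$) is already distinguishing, since any color-preserving automorphism must fix every vertex (respectively, every edge) and is therefore the identity; this supplies a distinguishing vertex coloring (respectively, edge coloring) by a well-orderable set, and the least cardinality exists as in (1).

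For (2), I would argue by contradiction: if $f(a) \cap f(b) = \emptyset$ for all distinct $a, b \in A$, then, fixing any $a_0 \in A$, the rule $\pi(x) = a$ when $x \in f(a)$ and $\pi(x) = a_0$ otherwise defines a map $\pi : X \to A$ whose image contains every $a \in A$ (since $f(a) \neq \emptyset$, any $x \in f(a)$ satisfies $\pi(x) = a$). Hence $A$ is the range of $\pi$, contradicting the hypothesis. For (3), I would invoke Steinhaus's theorem for the given measurable $X$ of positive measure: the difference set $X - X$ contains an open interval around $0$, which necessarily meets the dense subgroup $\mathbb{Q} + \sqrt{2}$ of $\mathbb{R}$, yielding $x, y \in X$ with $y - x \in \mathbb{Q} + \sqrt{2}$. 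The standard proof of Steinhaus applied to a fixed measurable set uses only inner regularity of Lebesgue measure and translation invariance, and therefore goes through in $\mathsf{ZF}$.

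The main obstacle is in (1) and (4): in $\mathsf{ZF}$ the class of all cardinals is not linearly ordered, so one must verify carefully that ``the least cardinal admitting a proper (respectively, distinguishing) coloring'' is a well-defined object rather than a mere lower bound. The greedy or identity construction above, together with the observation that a cardinal which injects into an ordinal is itself well-orderable, confines the comparison to alephs and resolves this point in both cases.
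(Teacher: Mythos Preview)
The paper does not supply its own proof of Fact~3.1; it is stated as a collection of known results with citations to \cite{GK1991}, \cite{DM2006}, \cite{HR2005}, and \cite{Sta2023}. Your sketches for items~(2) and~(3) are essentially correct (though $\mathbb{Q}+\sqrt{2}$ is a dense \emph{coset}, not a subgroup---this does not affect the argument, since only density is used).

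There is, however, a genuine gap in your treatment of the existence of the chromatic number in~(1), and the same gap recurs in~(4). You correctly produce a proper coloring by an ordinal $\alpha$ via the greedy recursion, and you correctly identify the obstacle: in $\mathsf{ZF}$ the cardinals are only partially ordered, so one must check that the least aleph $\gamma$ with a $\gamma$-coloring is actually $\preceq$-below \emph{every} cardinal $|C|$ for which $G$ is $C$-colorable, not merely the well-orderable ones. Your proposed resolution---``a cardinal which injects into an ordinal is itself well-orderable, so the competitors reduce to alephs''---does not close this gap, because an arbitrary competitor $|C|$ need not inject into $\alpha$ a priori. What is missing is the key trick from Galvin--Komj\'ath: given any proper coloring $f\colon V_G\to C$, use the well-ordering of $V_G$ to well-order the \emph{image} $f[V_G]$ (by declaring $c_1<c_2$ iff the $<$-least vertex of color $c_1$ precedes the $<$-least vertex of color $c_2$). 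Then $f[V_G]$ has some aleph cardinality $\beta$, the graph is $\beta$-colorable, so $\gamma\le\beta$, and $\beta=|f[V_G]|\preceq |C|$. This yields $\gamma\preceq |C|$ for arbitrary $C$. The same device handles the chromatic index, distinguishing number, and distinguishing index in~(4), since in each case the image of the coloring is a surjective image of a well-ordered set ($V_G$ or $E_G$).
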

	
	\subsection{Basic Results}
	
	\begin{prop}{($\mathsf{ZF}$)}
		{\em The Shelah-Soifer Graph $G = (\mathbb{R},\rho)$ has the following properties:
			\begin{enumerate}
				\item If $\mathsf{AC_{2}}$ holds, then $G$ has a minimal dominating set.
				
				\item Any independent set of $G$ is either non-measurable or of measure zero.
			\end{enumerate}
		}
	\end{prop}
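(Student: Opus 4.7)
The plan is to exploit the structure of $G$ as a disjoint union of components built from $\mathbb{Q}$-cosets linked by translations by $\pm\sqrt{2}$. Write $H=\mathbb{R}/\mathbb{Q}$ and let $s\in H$ denote the image of $\sqrt{2}$; since $n\sqrt{2}\notin\mathbb{Q}$ for every nonzero integer $n$, the element $s$ has infinite order in $H$, and the $\langle s\rangle$-orbits partition $\mathbb{R}$ into sets of the form $a+\mathbb{Q}+\mathbb{Z}\sqrt{2}$. Within any such set, two vertices $x=a+q_{1}+n\sqrt{2}$ and $y=a+q_{2}+m\sqrt{2}$ satisfy $x\rho y$ iff $|n-m|=1$, in which case every element of $\mathbb{Q}+(a+n\sqrt{2})$ is adjacent to every element of $\mathbb{Q}+(a+m\sqrt{2})$; vertices of a single $\mathbb{Q}$-coset are never adjacent, and vertices from different $\langle s\rangle$-orbits are never adjacent. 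Hence each $\langle s\rangle$-orbit yields a connected component resembling a two-way infinite path of complete bipartite blocks, and each $\mathbb{Q}$-coset is an independent set.

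For item (1), any proper 2-coloring of $G$ must be constant on every $\mathbb{Q}$-coset (because two cosets at $\pm\sqrt{2}$-distance span a complete bipartite subgraph, forcing each coset to be monochromatic) and must alternate between consecutive cosets inside each $\langle s\rangle$-orbit. So with color set $\{0,1\}$, each $\langle s\rangle$-orbit admits exactly two proper 2-colorings, related by swapping the two colors. Applying $\mathsf{AC}_{2}$ to the family of these 2-element sets indexed by the orbits in $H/\langle s\rangle$, I would obtain a global proper 2-coloring $f:\mathbb{R}\to\{0,1\}$. Setting $D=f^{-1}(0)$, every $v\in f^{-1}(1)$ has all of its (infinitely many) $\rho$-neighbors in $D$, so $D$ dominates $\mathbb{R}$; and for each $v\in D$, no neighbor of $v$ lies in $D$, so $D\setminus\{v\}$ fails to dominate $v$, establishing minimality.

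For item (2), I would invoke the Herrlich--Rhineghost theorem (Fact 3.1(3)): if $I\subseteq\mathbb{R}$ were an independent set of $G$ that is measurable with positive measure, then there would exist $x,y\in I$ with $y-x\in\mathbb{Q}+\sqrt{2}\subseteq(\mathbb{Q}+\sqrt{2})\cup(\mathbb{Q}-\sqrt{2})$, i.e., $x\rho y$, contradicting the independence of $I$.

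The main obstacle is the bookkeeping in item (1): one must verify that the two candidate 2-colorings of each $\langle s\rangle$-orbit really form an intrinsic 2-element set defined uniformly in the orbit (rather than depending on an arbitrary choice of representative $\mathbb{Q}$-coset), so that $\mathsf{AC}_{2}$ may be applied to the uniformly defined family and the chosen local colorings can be glued into a well-defined global proper coloring on all of $\mathbb{R}$. Item (2) is essentially immediate once one observes that membership in $\mathbb{Q}+\sqrt{2}$ literally witnesses a $\rho$-edge.
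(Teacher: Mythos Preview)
Your proposal is correct and follows essentially the same approach as the paper. For item (1), both you and the paper use $\mathsf{AC}_2$ to choose one of the two proper $2$-colorings on each component and then take a color class as the desired set; the paper phrases this via ``no odd cycles $\Rightarrow$ exactly two $2$-colorings per component'' and checks that the color class is a \emph{maximal independent set} (hence a minimal dominating set), whereas you give the explicit $\mathbb{Q}$-coset/$\langle s\rangle$-orbit description and verify minimality of the dominating set directly---these are the same argument in different clothing, and your worry about the $2$-element family being uniformly defined is unfounded since the set of proper $\{0,1\}$-colorings of a connected bipartite component is a canonical $2$-element set in $\mathsf{ZF}$. For item (2), both proofs are the immediate application of the Herrlich--Rhineghost fact.
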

	
	\begin{proof}
		First, we note that each component of $G$ is infinite, since $x,y\in \mathbb{R}$ are connected if and only if $x-y=q+\sqrt{2}z$ for some $q\in\mathbb{Q}$ and $z\in\mathbb{Z}$, and $G$ has no odd cycles. 
		
		(1). Under $\mathsf{AC_{2}}$, $G$ has a $2$-proper vertex coloring $f:V_{G}\rightarrow 2$ (see \cite{HR2005}). This is
		because, since $G$ has no odd cycles, each component of $G$ has precisely two 2-proper vertex colorings. Using $\mathsf{AC_{2}}$ one can select a 2-proper vertex coloring for each component, in order to obtain a 2-proper vertex coloring of $G$.
		We claim that $f^{-1}(i)$ (which is an independent set of $G$) is a maximal independent set (and hence a minimal dominating set) of $G$ for any $i\in \{0,1\}$. Fix $i\in \{0,1\}$ and assume that $f^{-1}(i)$ is not a maximal independent set. Then $f^{-1}(i)\cup \{v\}$ is an independent set for some $v\in \mathbb{R}\backslash f^{-1}(i)=f^{-1}(1-i)$ and so $\{v,x\}\not\in\rho$ for any $x\in f^{-1}(i)$. Since $f^{-1}(1-i)$ is an independent set, $\{v,x\}\not\in\rho$ for any $x\in f^{-1}(1-i)$. This contradicts the fact that $G$ has no isolated vertices. 
		
		(2). Let $M$ be an independent set of $G$. 
		Pick any $x,y\in M$ such that $x\neq y$. Then,
		
		\begin{center}
			$\neg(y\rho x)\implies (y-x)\not\in (\mathbb{Q}+\sqrt{2})\cup (\mathbb{Q}+(-\sqrt{2}))=\{r+\sqrt{2}:r\in\mathbb{Q}\}\cup\{r-\sqrt{2}:r\in\mathbb{Q}\}$.
		\end{center}
		
		Thus, there are no $x,y\in M$ where $x\neq y$ such that $y-x\in \mathbb{Q}+\sqrt{2}$. 
		By Fact 3.1(3), $M$ is not a measurable set of $\mathbb{R}$ with a positive measure. 
	\end{proof}

	\begin{prop}{($\mathsf{ZF}$)}
		{\em The following hold: 
			\begin{enumerate}
				\item Any graph based on a well-ordered set of vertices has a minimal vertex cover.
				
				\item Any graph based on a well-ordered set of vertices has a minimal dominating set.  
				
				\item Any graph based on a well-ordered set of vertices has a maximal matching.
				
				\item Any graph based on a well-ordered set of vertices with no isolated vertex, has a minimal edge cover.
			\end{enumerate}
		}
	\end{prop}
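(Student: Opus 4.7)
The plan is to use a well-ordering $V_G = \{v_\alpha : \alpha < \kappa\}$ together with the induced lexicographic well-ordering of $E_G \subseteq [V_G]^2$, and to construct each of the four objects by a transfinite greedy recursion along these orderings; every ``choice'' is a least element in a fixed well-order, so no form of $\mathsf{AC}$ is invoked.

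For (3) I would perform greedy addition on the well-ordered edges: start with $M_0 = \emptyset$, at successor stages put $M_{\alpha+1} = M_\alpha \cup \{e_\alpha\}$ when $e_\alpha$ shares no vertex with any edge of $M_\alpha$ and $M_{\alpha+1} = M_\alpha$ otherwise, and at limits take unions. An increasing union of matchings is a matching (any two of its edges coexist at a common stage), so every $M_\alpha$ is a matching; the final $M$ is maximal because any $e_\alpha \notin M$ must have been rejected at stage $\alpha+1$ and is therefore adjacent to some edge of $M$. For (2) I would run the same template on vertices to obtain a maximal independent set $I$, then invoke the elementary observation that every maximal independent set is a minimal dominating set: every $v \notin I$ has a neighbor in $I$ by maximality, while every $v \in I$ has no neighbor in $I$ by independence, so $I \setminus \{v\}$ fails to dominate $v$.

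For (1) I would greedily remove vertices from the trivial cover $V_G$: set $C_0 = V_G$; at successor stage $\alpha+1$ put $C_{\alpha+1} = C_\alpha \setminus \{v_\alpha\}$ if this is still a vertex cover, else $C_{\alpha+1} = C_\alpha$; at limits let $C_\lambda = \bigcap_{\beta < \lambda} C_\beta$. By transfinite induction each $C_\alpha$ should be a vertex cover, and the delicate case is the limit, which I would handle by showing that no edge $\{x,y\}$ ever loses both endpoints along the process: if $x$ were removed at some stage $\alpha$, the vertex-cover criterion at that stage forces $y \in C_\alpha \setminus \{x\}$; if $y$ were then removed at some $\beta > \alpha$, covering $\{x,y\}$ by $C_\beta \setminus \{y\}$ would require $x \in C_\beta$, but $x$ has already been removed, a contradiction. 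Minimality of $C := C_\kappa$ then follows because any $v_\alpha \in C$ that could be dropped would have $C_\alpha \setminus \{v_\alpha\} \supseteq C \setminus \{v_\alpha\}$ already a vertex cover at stage $\alpha+1$, contradicting the decision to keep $v_\alpha$.

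For (4) I would combine (3) with the vertex well-order. Take a maximal matching $M$ from (3); by maximality $I := V_G \setminus V(M)$ is independent, and the no-isolated-vertex hypothesis ensures each $u \in I$ has at least one neighbor, necessarily in $V(M)$. For each $u \in I$ define $w_u$ to be the least neighbor of $u$ in the well-order and form the edge cover $F_0 = M \cup \{\{u, w_u\} : u \in I\}$. Finally, delete from $F_0$ every $M$-edge $\{x,y\}$ whose \emph{both} endpoints receive attached leaves, i.e., both $\{u \in I : w_u = x\}$ and $\{u \in I : w_u = y\}$ are non-empty. The resulting $F$ is still an edge cover, since each affected endpoint remains covered by some attached $\{u,w_u\}$; a short case analysis shows each edge of $F$ has an exclusive endpoint---an unmatched leaf for the attached edges, and for a retained $M$-edge the endpoint whose attached-leaf set was empty---which is equivalent to $F$ being a minimal edge cover. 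I expect the main obstacle to be the limit-stage argument in (1), since limits of vertex covers are not preserved in general; (2) and (3) have limit-friendly structure (increasing unions of independent sets or matchings), and (4) leverages the well-order to bypass transfinite limits entirely.
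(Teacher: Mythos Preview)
Your proposal is correct and follows essentially the same approach as the paper: greedy transfinite recursion along the given well-order for (1) and (3), and for (4) the construction ``maximal matching $+$ one chosen edge per unmatched vertex, then prune the $M$-edges both of whose endpoints are already covered'' is exactly the paper's $F\cup M_1$. Two minor differences: for (1) the paper at each successor stage removes the \emph{least removable} vertex (so the recursion halts once nothing is removable) rather than scanning all $v_\alpha$ in order, and it does not spell out the limit-stage verification that you supply; for (2) the paper takes the complement of the minimal vertex cover from (1) to get a maximal independent set, whereas you build the maximal independent set directly by greedy addition---both routes are standard and yield the same conclusion.
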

	
	\begin{proof}
		(1).  Let $G=(V_{G}, E_{G})$ be a graph based on a well-ordered set of vertices and let $<$ be a well-ordering of $V_{G}$.
		We use transfinite recursion, without invoking any form of choice, to construct a minimal vertex cover. Let $M_{0}=V_{G}$. Clearly, $M_{0}$ is a vertex cover. Assume that $M_{0}$ is not a minimal vertex cover.
		Now, assume that for some ordinal number $\alpha$ we
		have constructed a sequence $(M_{\beta})_{\beta<\alpha}$ of vertex covers such that $M_{\beta}$ is not a minimal vertex cover for any $\beta<\alpha$. If $\alpha=\gamma+1$ is a successor ordinal for some ordinal $\gamma$, then let $M_{\alpha}=M_{\gamma+1}=M_{\gamma}\backslash\{v_{\gamma}\}$   
		where $v_{\gamma}$ is the $<$-minimal element of the well-ordered set $\{v\in M_{\gamma}: M_{\gamma}\backslash \{v\}$ is a vertex cover$\}$. If $\alpha$ is a limit ordinal, we use $M_{\alpha}=\bigcap_{\beta\in \alpha} M_{\beta}$.
		For any ordinal $\alpha$, if $M_{\alpha}$ is a minimal vertex cover, then we are done. Since the class of all ordinal numbers is a proper class, it follows that the recursion must terminate at some ordinal stage, say $\lambda$. Then, $M_{\lambda}$ is the minimal vertex cover.
		
		%Otherwise, there is some $v \in M_{\alpha}$ where $M_{\alpha}\backslash \{v\}$ is a vertex cover. In that case, let $M_{\alpha+1}= M_{\alpha}\backslash\{v_{\alpha}\}$ where $v_{\alpha}$ is the $\leq$-minimal element of the well-ordered set $\{v\in M_{\alpha}: M_{\alpha}\backslash \{v\}$ is a vertex cover$\}$. For limit ordinals $\alpha$, we use $M_{\alpha}=\bigcap_{i\in \alpha} M_{i}$. Clearly, $M =\bigcap_{i\in \lambda} M_{i}$ is a minimal vertex cover. 
		
		(2). This follows from (1) and the fact that if $I$ is a minimal vertex cover of $G$, then $V_{G}\backslash I$ is a maximal
		independent set (and hence a minimal dominating set) of $G$.
		
		(3). If $V_{G}$ is well-orderable, then $E_{G}\subseteq [V_{G}]^{2}$ is well-orderable as well. Thus, similar to the arguments of (1) we can obtain a maximal matching by using transfinite recursion in $\mathsf{ZF}$ and modifying the greedy algorithm to construct a maximal matching.  
		
		(4). Let $G=(V_{G}, E_{G})$ be a graph on a well-ordered set of vertices without isolated vertices. Let $\prec'$
		be a well-ordering of $E_{G}$. By (3), we can obtain a maximal matching $M$ in $G$. Let $W$ be the set of vertices not covered by $M$. For each vertex $w\in W$, the set $E_{w}=\{e\in E_{G}: e$ is incident with $w\}$ is well-orderable being a subset of the well-orderable set $(E_{G},\prec')$.  Let $f_{w}$ be the $(\prec'\restriction E_{w})$-minimal element of $E_{w}$. Let $F=\{f_{w}:w\in W\}$ and let $M_{1}= \{e \in M:$ at least one endpoint of $e$ is not covered by $F\}$. Then $F\cup M_{1}$ is a minimal edge cover of $G$.
	\end{proof}
	
	\begin{remark}
		We remark that the direct
		proofs of items (1–3) of  Proposition 3.3 do not adapt immediately to give a proof of item (4);
		the issue is in the limit steps, where a vertex of infinite degree might not be covered
		anymore by the intersection of edge covers.
	\end{remark}
	
	\section{Proper and distinguishing colorings}
	\begin{defn}
		Let $\mathcal{A}=\{A_{n}:n\in\omega\}$ be a disjoint countably infinite family of non-empty finite sets and $T=\{t_{n}:n\in \omega\}$ be a countably infinite sequence disjoint from $A=\bigcup_{n\in\omega}A_{n}$. Let $G_{1}(\mathcal{A},T)=(V_{G_{1}(\mathcal{A},T)}, E_{G_{1}(\mathcal{A},T)})$ be the infinite locally finite connected graph such that
		
		\begin{flushleft}
			$V_{G_{1}(\mathcal{A},T)} := (\bigcup_{n\in \omega}A_{n})\cup T$, and
			
			$E_{G_{1}(\mathcal{A},T)} := \bigg\{\{t_{n}, t_{n+1}\}:{n\in\omega}\bigg\} 
			\cup 
			\bigg\{\{t_{n}, x\}: n\in\omega,x\in A_{n}\bigg\}
			\cup
			\bigg\{\{x,y\}:n\in \omega, x,y\in A_{n}, x\not=y\bigg\}$.
		\end{flushleft} 
	\end{defn}
	
	We denote by $\mathcal{C}$ the statement ``For any disjoint countably infinite family of non-empty finite sets $\mathcal{A}$, and any countably infinite sequence $T=\{t_{n}:n\in \omega\}$ disjoint from $A=\bigcup_{n\in\omega}A_{n}$, the graph $G_{1}(\mathcal{A},T)$ has a chromatic number" and we denote by 
	$\mathcal{C}_{k}$ the statement ``Any infinite locally finite connected graph $G$ such that $\delta(G)\geq k$ has a chromatic number".
	
	\begin{thm}{($\mathsf{ZF}$)}{\em Fix a natural number $k\geq 3$. The following statements are equivalent:
			\begin{enumerate}
				\item K\H{o}nig’s Lemma.
				\item $\mathcal{C}$. 
				\item $\mathcal{C}_{k}$.
				\item Any infinite locally finite connected graph has a chromatic number.
				\item Any infinite locally finite connected graph has a chromatic index.
				\item Any infinite locally finite connected graph has a distinguishing number.
				
				\item Any infinite locally finite connected graph has a distinguishing index.
			\end{enumerate}
		}
	\end{thm}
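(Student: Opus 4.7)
I will prove the equivalences via the cycle $(1) \Rightarrow (4) \Rightarrow (2), (3), (5), (6), (7) \Rightarrow (1)$. The forward direction is clean: under K\H{o}nig's Lemma, any infinite locally finite connected graph $G$ has a well-orderable vertex set, since $V_G$ decomposes as a countable union of finite distance-spheres around a fixed vertex and K\H{o}nig's Lemma is equivalent to a countable union of finite sets being countable. Fact 3.1(1) and (4) then immediately provides the chromatic number, chromatic index, distinguishing number, and distinguishing index. The implications $(4) \Rightarrow (2), (3)$ are trivial special cases.

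For the converses, I would assume K\H{o}nig's Lemma fails and produce, by standard $\mathsf{ZF}$ manipulation, a disjoint countable family $\mathcal{A} = \{A_n : n \in \omega\}$ of non-empty finite sets with $A = \bigcup_n A_n$ Dedekind-finite (no partial choice function). I focus on $(2) \Rightarrow (1)$: let $G := G_1(\mathcal{A}, T)$ and suppose toward contradiction that $G$ has chromatic number $\kappa$, witnessed by a surjective proper coloring $f : V_G \to C$ with $|C| = \kappa$. The minimality of $\kappa$ forces $f$ to be irreducible (otherwise two independent color classes could be merged, lowering $|C|$). The central insight beyond \cite{Sta2023} is that although $C$ may fail to be well-orderable, the spine-image $f(T) \subseteq C$ always carries the canonical well-order $c \mapsto \min\{n : f(t_n) = c\}$ as a surjective image of $\omega$. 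For each $n$ with $f(A_n) \cap f(T) \neq \emptyset$, I canonically select $a_n \in A_n$ as the unique element sent by $f$ to the $f(T)$-least color in $f(A_n) \cap f(T)$, using that $f$ is injective on the clique $A_n \cup \{t_n\}$. If this overlap holds for infinitely many $n$, the resulting partial choice function contradicts the Dedekind-finiteness of $A$. Otherwise cofinitely many $A_n$ satisfy $f(A_n) \cap f(T) = \emptyset$; a fresh-color recoloring of the spine $t_m \mapsto c_{m \bmod 2}$ with $c_0, c_1 \notin C$ yields a proper coloring with color set $\{c_0, c_1\} \cup f(A)$, strictly smaller than $C$ whenever $|f(T)| > 2$, contradicting minimality. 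The residual degenerate case ($|f(T)| = 2$ with $f(T) \cap f(A) = \emptyset$) is addressed by combining irreducibility --- which forces each cross-pair with one color in $f(T)$ and one in $f(A)$ to be witnessed by a clique-in edge --- with Fact 3.1(2) applied to a suitable $\omega$-indexed family of non-empty finite subsets built from $f$, producing the required collision.

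The remaining implications $(3), (5), (6), (7) \Rightarrow (1)$ adapt the same strategy. For $(3)$ with $k \geq 3$, I would attach a fixed $(k-1)$-clique to each spine vertex so that $\delta(G) \geq k$ while preserving the Dedekind-finite obstruction on $\mathcal{A}$. For $(5), (6), (7)$, proper vertex coloring is replaced by edge coloring or distinguishing coloring on $G_1(\mathcal{A}, T)$ (or mild variants), and the rich symmetric action $\prod_n \mathrm{Sym}(A_n) \leq \mathrm{Aut}(G)$ is used to convert the relevant invariant into an extraction of a partial choice function; the spine again supplies the canonical well-order that the finite cliques lack. The principal obstacle throughout is handling non-well-orderable color sets in the converses, and in particular excluding the degenerate case of the recoloring argument in $(2) \Rightarrow (1)$; the combination of irreducibility with Fact 3.1(2) is the key tool that replaces Stawiski's direct well-orderability hypothesis on colors.
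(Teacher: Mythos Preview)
Your argument for $(2)\Rightarrow(1)$ has a genuine gap. The claim that minimality of $\kappa=|C|$ forces $f$ to be irreducible is false in $\mathsf{ZF}$: merging two independent color classes produces a proper coloring with color set $C\setminus\{c\}$, but $|C\setminus\{c\}|<|C|$ only when $C$ is Dedekind-finite, which you have not established. More seriously, your fresh-color recoloring step does not yield the claimed strict inequality. With $C=f(T)\cup f(A)$, the correct comparison is between $|f(A)|+2$ and $|f(A)|+|f(T)\setminus f(A)|$, so you need $|f(T)\setminus f(A)|\geq 3$, not merely $|f(T)|>2$; and when $|f(T)\setminus f(A)|\leq 1$ your new coloring is actually \emph{larger}. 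Your ``degenerate case'' only names one of several bad subcases, and its resolution leans on the unproved irreducibility together with a vague appeal to Fact~3.1(2) whose intended instantiation is not visible.

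The paper avoids this case analysis entirely by proving directly that $f[A]$ is infinite and Dedekind-finite (the latter because $A$ is Dedekind-finite and images of Dedekind-finite sets are Dedekind-finite). One then removes three colors $c_0,c_1,c_2$ from $f[A]$ itself---each preimage $M_{c_j}\cap A$ is finite, so one can reassign those vertices to other colors already in $f[A]$---and recolors the spine with $c_0,c_1$. The resulting color set $f[A]\setminus\{c_2\}$ satisfies $f[A]\setminus\{c_2\}\prec f[A]\preceq C$ precisely because $f[A]$ is Dedekind-finite, with no need for irreducibility, fresh colors, or any split on $|f(T)|$. The same Dedekind-finiteness of $f[A]$ (or of $f[B]$ for the edge set $B$) drives the analogous arguments for (5), (6), (7). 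For $(3)\Rightarrow(1)$ the paper simply observes that $G_1(\mathcal{A},T)$ already has $\delta\geq k$ once one restricts to families with $k\mid|A_n|$, and then reduces via $\mathsf{PAC}^\omega_{k\times\mathrm{fin}}\Rightarrow\mathsf{AC}^\omega_{\mathrm{fin}}$; attaching extra cliques to the spine is unnecessary.
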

	
	\begin{proof}
		(1)$\Rightarrow$(2)-(7) 
		Let $G=(V_{G}, E_{G})$ be an infinite locally finite connected graph.
		Pick some $r \in V_{G}$. Let $V_{0}(r)=\{r\}$. For each integer $n \geq 1$, define $V_{n}(r) = \{v \in V_{G} : d_{G}(r, v) = n\}$ where ``$d_{G}(r, v) = n$'' means there are $n$ edges in the shortest path joining
		$r$ and $v$. Each $V_{n}(r)$ is finite by the local
		finiteness of $G$, and $V_{G} = \bigcup_{n\in \omega}V_{n}(r)$ since $G$ is connected. 
		By $\mathsf{AC^{\omega}_{fin}}$, $V_{G}$ is countably infinite (and hence, well-orderable). The rest follows from Fact 3.1(1,4) and the fact that  
		$G_{1}(\mathcal{A},T)$ is
		an infinite locally finite connected graph for any given disjoint countably infinite family $\mathcal{A}$ of non-empty finite sets and any countably infinite sequence $T=\{t_{n}:n\in \omega\}$ disjoint from $A=\bigcup_{n\in\omega}A_{n}$.
		
		(2)$\Rightarrow$(1) 
		Since $\mathsf{AC_{fin}^{\omega}}$ is equivalent to its partial version $\mathsf{PAC_{fin}^{\omega}}$ (Every countably infinite family of non-empty finite sets has an infinite
		subfamily with a choice function) (cf. \cite{HR1998}, \cite[the proof of Theorem 4.1(i)]{DHHKR2008} or footnote 4), it suffices to show that $\mathcal{C}$  implies $\mathsf{PAC_{fin}^{\omega}}$.
		In order to achieve this, we modify the arguments of Herrlich--Tachtsis \cite[Proposition 23]{HT2006} suitably.
		Let $\mathcal{A}=\{A_{n}:n\in \omega\}$ be a countably infinite set of non-empty finite sets without a partial choice function. Without loss of generality, we assume that $\mathcal{A}$ is disjoint. Pick a countably infinite sequence $T=\{t_{n}:n\in \omega\}$ disjoint from $A=\bigcup_{i\in\omega}A_{i}$ and consider the graph $G_{1}(\mathcal{A},T)=(V_{G_{1}(\mathcal{A},T)}, E_{G_{1}(\mathcal{A},T)})$ as in Figure 1.
		
		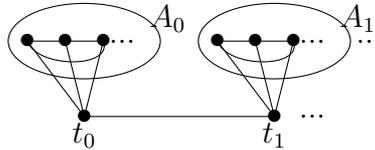
\begin{figure}[!ht]
			\centering
			\begin{minipage}{\textwidth}
				\centering
				\begin{tikzpicture}[scale=0.5]
					\draw (-2.5, 1) ellipse (2 and 1);
					\draw (-4,1) node {$\bullet$};
					\draw (-3,1) node {$\bullet$};
					\draw (-2,1) node {$\bullet$};
					\draw (-1.5,1) node {...};
					\draw (-0.3,1.6) node {$A_{0}$};
					\draw (-4,1) -- (-3,1);
					\draw (-3,1) -- (-2,1);
					\draw (-2,1) to[out=-70,in=-70] (-4,1);
					\draw (-2.5,-1) node {$\bullet$};
					\draw (-2.5,-1.5) node {$t_{0}$};
					\draw (-4,1) -- (-2.5,-1);
					\draw (-3,1) -- (-2.5,-1);
					\draw (-2,1) -- (-2.5,-1);
					\draw (2.5, 1) ellipse (2 and 1);
					\draw (1,1) node {$\bullet$};
					\draw (2,1) node {$\bullet$};
					\draw (3,1) node {$\bullet$};
					\draw (3.5,1) node {...};
					\draw (4.7,1.6) node {$A_{1}$};
					\draw (1,1) -- (2,1);
					\draw (2,1) -- (3,1);
					\draw (1,1) to[out=-70,in=-70] (3,1);
					\draw (2.5,-1) node {$\bullet$};
					\draw (2.5,-1.5) node {$t_{1}$};
					\draw (3,1) -- (2.5,-1);
					\draw (2,1) -- (2.5,-1);
					\draw (1,1) -- (2.5,-1);
					\draw (-2.5,-1) -- (2.5,-1);
					\draw (3.5,-1) node {...};
					\draw (5,1) node {...};
				\end{tikzpicture}
			\end{minipage}
			\caption{\em Graph $G_{1}(\mathcal{A},T)$, an infinite locally finite connected graph.}
		\end{figure}
		
		Let $f: V_{G_{1}(\mathcal{A},T)} \rightarrow C$ be a $C$-proper vertex coloring of $G_{1}(\mathcal{A},T)$, i.e., a map such that if $\{x,y\}\in E_{G_{1}(\mathcal{A},T)}$ then $f(x) \neq f(y)$. Then for each $c \in C$, the set $M_{c}=\{v\in f^{-1}(c): v\in A_{i}$ for some $i\in\omega\}$ must be finite, otherwise $M_{c}$ will generate a partial choice function for $\mathcal{A}$. 
		
		\begin{claim}
			{\em $f[\bigcup_{n\in \omega}A_{n}]$ is infinite.}
		\end{claim}
		
		\begin{proof}
			Otherwise, $\bigcup_{n\in \omega}A_{n}=\bigcup_{c\in f[\bigcup_{n\in \omega}A_{n}]} M_{c}$
			is finite since the finite union of finite sets is finite in $\mathsf{ZF}$ and we obtain a contradiction. 
		\end{proof}
		
		\begin{claim}
			{\em $f[\bigcup_{n\in \omega}A_{n}]$ is Dedekind-finite.}  
		\end{claim}
		
		\begin{proof}
			First, we note that $\bigcup_{n\in \omega}A_{n}$ is Dedekind-finite since $\mathcal{A}$ has no partial choice function. For the sake of contradiction, assume that $C=\{c_{i}:i\in \omega\}$ is a countably infinite subset of $f[\bigcup_{n\in \omega}A_{n}]$. 
			Fix a well-ordering $<$ of $\mathcal{A}$ (since $\mathcal{A}$ is countable, and hence well-orderable).
			Define $d_{i}$ to be the {\em unique} element of $M_{c_{i}} \cap A_{n}$ 
			where $n$ is the $<$-least element of $\{m\in\omega: M_{c_{i}} \cap A_{m}\neq \emptyset\}$.
			%where $n$ is the least (with respect to $\prec$) such that this intersection is nonempty. 
			Such an $n$ exists since $c_{i} \in f[\bigcup_{n<\omega} A_{n}]$ and
			$M_{c_{i}} \cap A_{n}$ has a single element since $f$ is a proper vertex coloring.
			%Define $d_{i}:=\prec$-minimal element of $f^{-1}(c_{i})\cap \bigcup_{n\in \omega}A_{n}$.
			Then $\{d_{i}:i\in\omega\}$ is a countably infinite subset of $\bigcup_{n\in \omega}A_{n}$ which contradicts the fact that $\bigcup_{n\in \omega}A_{n}$ is Dedekind-finite.
		\end{proof}
		
		The following claim states that $\mathcal{C}$ fails.
		
		\begin{claim}
			{\em There is a $C_{1}$-proper vertex coloring $f:V_{G_{1}(\mathcal{A},T)}\rightarrow C_{1}$ of $G_{1}(\mathcal{A},T)$ such that $C_{1}\prec C$. Thus, $G_{1}(\mathcal{A},T)$ has no chromatic number.}  
		\end{claim}
		\begin{proof}
			Fix some $c_{0} \in f[\bigcup_{n\in \omega}A_{n}]$.
			Then $Index(M_{c_{0}}) = \{n \in \omega : M_{c_{0}} \cap A_{n} \neq \emptyset\}$ is finite. By claim 4.3, there exists some
			$b_{0} \in (f[\bigcup_{n\in \omega}A_{n}]\backslash \bigcup_{m\in Index(M_{c_{0}})} f[A_{m}])$ since the finite union of finite sets is finite. Define a proper vertex coloring $g: \bigcup_{n\in \omega}A_{n} \rightarrow (f[\bigcup_{n\in \omega}A_{n}]\backslash {c_{0}})$ as follows:
			
			\begin{center}
				$g(x) =
				\begin{cases} 
					f(x) & \text{if}\, f(x)\neq c_{0}, \\
					
					b_{0} & \text{otherwise}.
				\end{cases}$
			\end{center}
			
			Similarly, we can define a proper vertex coloring $h: \bigcup_{n\in \omega}A_{n}\rightarrow (f[\bigcup_{n\in \omega}A_{n}]\backslash \{ c_{0},c_{1},c_{2}\})$ for some $c_{0}, c_{1},c_{2}\in f[\bigcup_{n\in \omega}A_{n}]$. Let $h(t_{2n})=c_{0}$ and $h(t_{2n+1})=c_{1}$ for all $n\in\omega$. 
			Thus, $h:V_{G_{1}} \rightarrow (f[\bigcup_{n\in \omega}A_{n}]\backslash \{c_{2}\})$ is a $f[\bigcup_{n\in \omega}A_{n}]\backslash \{c_{2}\}$-proper vertex coloring of $G_{1}$. 
			We define $C_{1}=f[\bigcup_{n\in\omega} A_{n}]\backslash \{c_{2}\}$. By claim 4.4,
			%\begin{center}
			$C_{1} \prec f[\bigcup_{n\in\omega} A_{n}] \preceq C$. 
			%\end{center} 
		\end{proof}
		
		Similarly, we can see (4)$\Rightarrow$(1).
		
		(3)$\Rightarrow$(1)  
		Let $\mathcal{A}=\{A_{n}:n\in \omega\}$ be a disjoint countably infinite set of non-empty finite sets without a partial choice function, such that $k$ divides $\vert A_{n}\vert$ for each $n\in\omega$ and $k\in\omega\backslash\{0,1\}$. Assume $T$ and $G_{1}(\mathcal{A},T)$ as in the proof of (2)$\Rightarrow$(1).
		Then $\delta(G_{1}(\mathcal{A},T))\geq k$.
		By the arguments of (2)$\Rightarrow$(1), $\mathcal{C}$ implies $\mathsf{PAC^{\omega}_{\text{$k$}\times fin}}$. 
		Following the arguments of \cite[Theorem 4.1]{DHHKR2008}, we can see that $\mathsf{PAC^{\omega}_{\text{$k$}\times fin}}$ implies $\mathsf{AC^{\omega}_{fin}}$.\footnote{For the reader’s convenience, we write down the proof. First, we can see that $\mathsf{PAC^{\omega}_{\text{$k$}\times fin}}$ implies $\mathsf{AC^{\omega}_{\text{$k$}\times fin}}$.
			Fix a family $\mathcal{A} = \{A_{i}:i\in \omega\}$ of disjoint nonempty finite sets such that $k$ divides $\vert A_{i} \vert$  for each $i\in\omega$. Then the family
			\begin{center}
				$\mathcal{B} = \{B_{i} : i \in \omega\}$ where $B_{i} = \prod_{j\leq i} A_{j}$   
			\end{center}
			is a disjoint family such that $k$ divides $\vert B_{i} \vert$ and any partial choice function on $\mathcal{B}$ yields a choice function for $\mathcal{A}$.
			
			Finally, fix a family $\mathcal{C} = \{C_{i} : i \in \omega\}$
			of disjoint nonempty finite sets. Then
			%\begin{center}
			$\mathcal{D} = \{D_{i}: i \in \omega\}$ where $D_{i}= C_{i} \times k$
			%\end{center}
			is a pairwise disjoint family of finite sets where $k$ divides $\vert D_{i} \vert$ for each $i\in\omega$. Thus $\mathsf{AC^{\omega}_{\text{$k$}\times fin}}$ implies that $\mathcal{D}$ has a choice function $f$ which determines a choice function for $\mathcal{C}$.}
		
		(5)$\Rightarrow$(1) 
		Let $\mathcal{A}=\{A_{n}:n\in \omega\}$ be a disjoint countably infinite set of non-empty finite sets without a partial choice function and $T=\{t_{n}:n\in \omega\}$ be a sequence 
		disjoint from $A=\bigcup_{n\in\omega}A_{n}$. 
		Let $H_1$ be the graph obtained from the graph $G_{1}(\mathcal{A},T)$ of (2)$\Rightarrow$(1) after deleting the edge set $\{\{x,y\}: n\in \omega, x,y\in A_n, x\neq y\}$. Clearly, $H_1$ is an infinite locally finite connected graph.
		
		\begin{claim}
			{\em $H_1$ has no chromatic index.}
		\end{claim}
		
		\begin{proof}
			Assume that the graph $H_{1}$ has a chromatic index. Let $f: E_{H_1}\to C$ be a proper edge coloring with $|C|=\kappa$, where $\kappa$ is the chromatic index of $H_1$. Let 
			%\begin{center}
			$B = \{\{t_n,x\}: n\in \omega,  x\in A_n\}$.  
			%\end{center}
			Similar to claims 4.3, 4.4, and 4.5, $f[B]$ is an infinite, Dedekind-finite set and there is a proper 
			edge coloring $h: B \to f[B]\setminus\{c_0, c_1, c_2\}$  for some $c_0, c_1,c_2\in f[B]$. Finally, define $h(\{t_{2n}, t_{2n+1}\})= c_0$ and $h(\{t_{2n+1}, t_{2n+2}\})= c_1$ for all $n\in \omega$. Thus, we obtain a $f[B]\setminus\{c_2\}$-proper edge coloring $h:E_{H_1}\to f[B]\setminus\{c_2\}$, with 
			%\begin{center}
			$f[B]\setminus\{c_2\} \prec f[B]\preceq C$ 
			%\end{center}
			as $f[B]$ is Dedekind-finite, contradicting the fact that $\kappa$ is the chromatic index of $H_1$.
		\end{proof}
		
		(6)$\Rightarrow$(1) Assume $\mathcal{A}$ and $T$ as in the proof of (5)$\Rightarrow$(1). Let $H_1^{1}$ be the graph obtained from $H_1$ of (5)$\Rightarrow$(1) by adding two new vertices $t'$ and $t''$ and the edges $\{t'', t'\}$ and $\{t', t_0\}$. 
		
		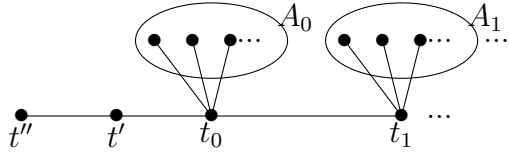
\begin{figure}[!ht]
			\centering
			\begin{minipage}{\textwidth}
				\centering
				\begin{tikzpicture}[scale=0.5]
					\draw (-2.5, 1) ellipse (2 and 1);
					\draw (-4,1) node {$\bullet$};
					\draw (-3,1) node {$\bullet$};
					\draw (-2,1) node {$\bullet$};
					\draw (-1.5,1) node {...};
					\draw (-0.3,1.6) node {$A_{0}$};
					\draw (-2.5,-1) node {$\bullet$};
					\draw (-2.5,-1.5) node {$t_{0}$};
					\draw (-4,1) -- (-2.5,-1);
					\draw (-3,1) -- (-2.5,-1);
					\draw (-2,1) -- (-2.5,-1);
					\draw (2.5, 1) ellipse (2 and 1);
					\draw (1,1) node {$\bullet$};
					\draw (2,1) node {$\bullet$};
					\draw (3,1) node {$\bullet$};
					\draw (3.5,1) node {...};
					\draw (4.7,1.6) node {$A_{1}$};
					\draw (2.5,-1) node {$\bullet$};
					\draw (2.5,-1.5) node {$t_{1}$};
					\draw (3,1) -- (2.5,-1);
					\draw (2,1) -- (2.5,-1);
					\draw (1,1) -- (2.5,-1);
					\draw (-2.5,-1) -- (2.5,-1);
					\draw (3.5,-1) node {...};
					\draw (5,1) node {...};
					
					\draw (-5,-1) node {$\bullet$};
					\draw (-5,-1.5) node {$t'$};
					\draw (-5,-1) -- (-2.5,-1);
					\draw (-7.5,-1) node {$\bullet$};
					\draw (-7.5,-1.5) node {$t''$};
					\draw (-7.5,-1) -- (-5,-1);
				\end{tikzpicture}
			\end{minipage}
			\caption{\em Graph $H^{1}_{1}$, an infinite locally finite connected graph.}
		\end{figure}
		It suffices to show that $H_1^1$ has no distinguishing number.
		We recall the fact that whenever $j: V_{H_1^1}\to V_{H_1^1}$ is an automorphism, $\varphi(x_{1},..., x_{r})$ is a first-order $\mathcal{L}$-formula on $r$ variables (where $\mathcal{L}$ is the language of graphs) for some $r\in\omega\backslash\{0\}$, and $a_{i}\in V_{H_1^1}$ for each $1\leq i\leq r$, then $H_1^1\models\varphi(a_{1},..., a_{r})$ if and only if $H_1^1\models\varphi(j(a_{1}),..., j(a_{r}))$ (cf. Definition 2.8). 
		
		\begin{claim}
			{\em $t', t''$, and $t_m$ are fixed by every automorphism for each non-negative integer $m$.}
		\end{claim}
		\begin{proof}
			Fix non-negative integers $n,m,r$. The first-order $\mathcal{L}$-formula
			\[ \mathsf{Deg}_n(x) := \exists x_0\dots \exists x_{n-1} \big(\bigwedge_{i\neq j}^{n-1}x_i\neq x_j\wedge  \bigwedge_{i<n}x\neq x_i \wedge \bigwedge_{i<n} Exx_i \wedge \forall y(Exy \to \bigvee_{i<n}y=x_i)\big)\]
			expresses the property that a vertex $x$ has degree $n$, where $Eab$ denotes the existence of an edge between vertices $a$ and $b$.  We define the following first-order $\mathcal{L}$-formula:
			
			\begin{flushleft}
				$\varphi(x) := \mathsf{Deg}_1(x)\wedge \exists y(Exy \wedge \mathsf{Deg}_2(y))$.
				%$\psi_r(x,y) : =  \exists x_0\dots \exists x_r\big(x_0 = y \wedge x_r = x\wedge \bigwedge_{i\neq j}^r x_i\neq x_j \wedge \bigwedge_{i<r} Ex_ix_{i+1} \wedge\exists z(Exz\wedge z\neq x_{r-1})\big)$,
				%and $\varphi_n(x):=\exists y ( \mathsf{Deg}_2(y) \wedge \psi_n(x,y))$.\footnote{i.e., $\psi_r(x,y)$ depicts there exists a path of length $r$ from $y$ to $x$ such that $x$ has degree more than 1 and $\phi_{n}(x)$ depicts there exists a vertex of degree $2$ for which $\psi_{n}(x,y)$ holds.}
			\end{flushleft}
			
			It is easy to see the following:
			
			\begin{enumerate}
				\item[(i)] $t''$ is the unique vertex such that $H_1^1\models \varphi(t'')$. This means $t''$ is the unique vertex such that $\deg(t'')=1$ and $t''$ has a neighbor of degree $2$.
				\item[(ii)] $t'$ is the unique vertex such that $H_1^1\models  \mathsf{Deg}_2(t')$. So $t'$ is the unique vertex with $\deg(t')= 2$.
				%\item [(iii)] $t_m$ is the unique vertex such that $H_1^1\models \varphi_{m+1}(t_m)$. This means  $t_m$ is the unique vertex having path length $m-1$ from $t'$ and $\deg(t_{m})>1$. 
			\end{enumerate}
			Fix any automorphism $\tau$. Since every automorphism preserves the properties mentioned in (i)--(ii), $t'$ and $t''$ are fixed by $\tau$. 
			The vertices $t_{m}$ are fixed by $\tau$ by induction as follows: Since $t_{i}$ is the unique vertex of path length $i+1$ from $t''$ such that the degree of $t_{i}$ is greater than 1, where $i\in \{0,1\}$, we have that $t_{0}$ and $t_{1}$ are fixed by $\tau$.
			Assume that $\tau(t_{l})=t_{l}$ for all $l<m-1$. We show that $\tau(t_{m})=t_{m}$. Now, $\tau(t_{m})$ is a neighbour of $\tau(t_{m-1}) = t_{m-1}$ which is of degree at least $2$, so $\tau(t_{m})$ must be either $t_{m-2}$ or $t_{m}$, but $t_{m-2} = \tau(t_{m-2})$ is already taken. So, $\tau(t_{m})=t_{m}$.
		\end{proof}
		
		\begin{claim}
			{\em Fix $m\in \omega$ and $x\in A_m$. Then $Orb_{Aut(H_1^1)}(x)=\{g(x): g\in Aut(H_1^1)\}=A_m$.} 
		\end{claim}
		
		\begin{proof}
			This follows from the fact that each $y\in \bigcup_{n\in\omega}A_n$ has path length $1$ from $t_{m}$ if and only if $y\in A_m$.       
		\end{proof}
		
		\begin{claim}
			{\em $H_1^1$ has no distinguishing number.}
		\end{claim}
		\begin{proof}
			Assume that the graph $H_{1}^{1}$ has a distinguishing number. Let  $f: V_{H_1^1} \to C$ be a distinguishing vertex coloring with $|C|=\kappa$, where $\kappa$ is the distinguishing number of $H_1^1$. Similar to claims 4.3 and 4.4, $f[\bigcup_{n\in \omega}A_n]$ is infinite and Dedekind-finite. Consider a coloring $h: \bigcup_{n\in \omega}A_n\to f[\bigcup_{n\in \omega}A_n]\setminus\{c_0,c_{1},c_{2}\}$ for some $c_{0}, c_{1}, c_{2}\in f[\bigcup_{n\in \omega}A_{n}]$, just as in claim 4.5. Let $h(t)=c_{0}$ for all $t\in \{t'',t'\}\cup T$. Then, $h: V_{H^1_{1}} \rightarrow (f[\bigcup_{n\in \omega}A_{n}]\backslash \{c_{1},c_{2}\})$ is a $f[\bigcup_{n\in \omega}A_n]\setminus\{c_1,c_{2}\}$-distinguishing vertex coloring of $H^1_{1}$. 
			Finally, 
			%\begin{center}
			$f[\bigcup_{n\in \omega}A_n]\setminus\{c_1,c_{2}\} \prec  f[\bigcup_{n\in\omega} A_{n}] \preceq C$
			%\end{center}
			contradicts the fact that $\kappa$ is the distinguishing number of $H_1^1$.
		\end{proof}
		
		(7)$\Rightarrow$(1) Assume $\mathcal{A}$, $T$, and $H^1_1$ as in the proof of (6)$\Rightarrow$(1). By claim 4.7, every automorphism fixes the edges $\{t'',t'\}$, $\{t', t_0\}$ and $\{t_n, t_{n+1}\}$ for each $n\in \omega$. 
		Moreover, if $H_1^1$ has a distinguishing edge coloring $f$, then for each $n\in \omega$ and $x,y\in A_n$ such that $x\neq y$, $f(\{t_n, x\})\neq f(\{t_n, y\})$.
		
		\begin{claim}
			{\em $H_1^1$ has no distinguishing index.}
		\end{claim}
		\begin{proof}
			This follows modifying the arguments of claims 4.6 and 4.9.     
		\end{proof}
	\end{proof}
	
	\section{Irreducible proper coloring and covering properties}
	\begin{thm}{($\mathsf{ZF}$)}{\em The following statements are equivalent:
			\begin{enumerate}
				\item K\H{o}nig’s Lemma.
				\item Every infinite locally finite connected graph has an irreducible proper coloring.
				
				\item Every infinite locally finite connected graph has a minimal dominating set. 
				
				\item Every infinite locally finite connected graph has a minimal edge cover.
				
				\item Every infinite locally finite connected graph has a maximal matching.
			\end{enumerate}
		}
	\end{thm}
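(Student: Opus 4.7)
The plan for $(1)\Rightarrow(2)$--$(5)$ mimics the argument in the forward direction of Theorem 4.2: given an infinite locally finite connected graph $G=(V_G,E_G)$, pick $r\in V_G$ and write $V_G=\bigcup_{n\in\omega}V_n(r)$ with each sphere $V_n(r)$ finite by local finiteness; K\H{o}nig's Lemma (equivalently $\mathsf{AC_{fin}^{\omega}}$) then makes $V_G$ countable, hence well-orderable. A connected graph on more than one vertex has no isolated vertex, so Fact 3.1(1) yields (2) and Proposition 3.3(2), (3), (4) yield (3), (5), and (4) respectively.

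For the reverse implications I would argue contrapositively through $\mathsf{PAC_{fin}^{\omega}}$. Fix a disjoint countably infinite family $\mathcal{A}=\{A_n:n\in\omega\}$ of finite sets of size at least $2$ admitting no partial choice function, and a ray $T=\{t_n:n\in\omega\}$ disjoint from $A=\bigcup_{n\in\omega}A_n$. The goal is, for each of (2)--(5), to exhibit an infinite locally finite connected graph $H$ built from $\mathcal{A}$, $T$, and a small auxiliary gadget (like the pendant path attached at $t_0$ used in the proof of Theorem 4.2) so that the existence of the relevant object on $H$ forces a partial choice for $\mathcal{A}$. For (2), one can take $H$ close to $G_{1}(\mathcal{A},T)$: if $f$ is an irreducible proper coloring, Claims 4.3 and 4.4 show that $f[\bigcup_{n\in\omega}A_n]$ is Dedekind-finite, while irreducibility forces each pair of colors to appear on some edge, and the localization of edges in $H$ then pins down the color pattern on each $A_n$ enough to extract a selection. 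For (5) and (4), I would attach to each $A_n$ an auxiliary \emph{matcher} vertex $\ell_n$ adjacent to all of $A_n$ and connect $A_n$ back to the spine via $t_n$; a short case analysis using maximality (resp.\ minimality) then shows that in any maximal matching (resp.\ minimal edge cover) of $H$, the vertex $\ell_n$ is matched with (resp.\ covered by) a uniquely determined member of $A_n$ for every $n$, producing a choice function for $\mathcal{A}$. For (3), a dual design of the gadget should force $|D\cap A_n|=1$ for cofinally many $n$ in any minimal dominating set $D$, again producing a partial choice.

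The principal obstacle is engineering the four gadgets so that each is simultaneously locally finite, keeps $H$ connected, and rigid enough that the associated combinatorial object singles out elements of $A_n$ for infinitely many $n$ in a decodable fashion. Once the gadgets are in hand, the Dedekind-finiteness and finite-union arguments from Claims 4.3--4.5 adapt mechanically to contradict our assumption that $\mathcal{A}$ has no partial choice function.
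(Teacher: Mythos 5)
Your forward direction $(1)\Rightarrow(2)$--$(5)$ matches the paper (countability of $V_G$ via $\mathsf{AC_{fin}^{\omega}}$, then Fact 3.1(1) and Proposition 3.3), and your gadget for (5) is essentially the paper's graph $G_4$: a vertex $r_n$ joined to all of $A_n$ must be matched to exactly one element of $A_n$ by maximality. The reverse implications for (2), (3) and especially (4), however, have real gaps beyond ``engineering details.''

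For (4) the proposed gadget fails outright. If $\ell_n$ is joined to all of $A_n$ and $A_n$ is tied to the spine via $t_n$, then the set of \emph{all} edges $\{\ell_n,x\}$, $x\in A_n$ (the full star at $\ell_n$), together with canonically chosen spine edges, is a minimal edge cover: removing any $\{\ell_n,y\}$ leaves $y$ uncovered. This cover is definable without any choice, so its existence yields nothing. A minimal edge cover does \emph{not} in general cover each vertex by a unique edge; only the leaves of its star components have that property. The paper circumvents this with the Delhomm\'{e}--Morillon device: it attaches to each $A_i$ a well-ordered set $B_i$ with $\vert B_i\vert=\vert A_i\vert+k$ (so $B_i$ is the range of no map with domain $A_i$), joined completely to $A_i$. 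Minimality forces the cover restricted to $A_i\cup B_i$ to be a union of stars, so the neighborhoods $f_i(\epsilon)$ of distinct $\epsilon\in B_i$ in the cover meet in at most one point, while Fact 3.1(2) guarantees two of them \emph{do} meet; the first such intersecting pair (in the well-ordering of $B_i\times B_i$) pins down a single element of $A_i$. This cardinality/non-surjectivity argument is the essential idea your proposal is missing.

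For (2), your plan to ``extract a selection'' from the color pattern on a graph close to $G_1(\mathcal{A},T)$ is the wrong mechanism: the $A_n$ are cliques, so a proper coloring assigns \emph{all} of $\bigcup A_n$ distinct colors within each block, and there is no canonical way to select one vertex per block from a Dedekind-finite color set. The paper instead shows the graph has \emph{no} irreducible proper coloring at all, by producing two color classes $f^{-1}(c_0)$, $f^{-1}(c_1)$ whose union is independent. For this, the graph matters: the paper's $G_2$ joins consecutive blocks $A_n$, $A_{n+1}$ completely and has no infinite spine of auxiliary vertices, so \emph{every} color class is finite and supported on finitely many consecutive blocks; keeping the ray $T$ (as in $G_1(\mathcal{A},T)$) lets a color class contain infinitely many $t_n$ and the independence argument breaks. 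Similarly for (3), the design constraint you leave open is the whole point: one needs every vertex of $A_n$ to have the same closed neighborhood (cliques $A_n$ completely joined to $A_{n\pm1}$, no spine), so that minimality forces $\vert D\cap A_n\vert\le 1$ while domination forces $D$ to meet $A_{n-1}\cup A_n\cup A_{n+1}$; with the naive spine graph $G_1(\mathcal{A},T)$ the set $T$ itself is already a choice-free minimal dominating set.
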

	
	\begin{proof}
		Implications (1)$\Rightarrow$(2)-(5) follow from Proposition 3.3, and the fact that $\mathsf{AC_{fin}^{\omega}}$ implies every infinite locally finite connected graph is countably infinite.
		
		(2)$\Rightarrow$(1) In view of the proof of Theorem 4.2 ((2)$\Rightarrow$(1)), it suffices to show that the given statement implies $\mathsf{PAC_{fin}^{\omega}}$.
		Let $\mathcal{A}=\{A_{n}:n\in \omega\backslash\{0\}\}$ be a disjoint countably infinite set of non-empty finite sets without a partial choice function. Pick $t\not\in \bigcup_{i\in\omega\backslash\{0\}}A_{i}$. Let $A_{0}=\{t\}$. Consider the following  infinite locally finite connected graph $G_{2}=(V_{G_{2}}, E_{G_{2}})$ (see Figure 3):
		
		\begin{figure}[!ht]
			\centering
			\begin{minipage}{\textwidth}
				\centering
				\begin{tikzpicture}[scale=0.5]
					\draw (-6, 1) node {$\bullet$};
					\draw (-6, 1.5) node {$t$};
					
					\draw (-3,1) ellipse (1 and 2);
					\draw (-3,2) node {$\bullet$};
					\draw (-3,1) node {$\bullet$};
					\draw (-3,0) node {$\bullet$};
					\draw (-1.7,2.5) node {$A_{1}$};
					%\draw (-3,-0.5) node {...};
					\draw (-3, 1) -- (-3,2);
					\draw (-3, 0) -- (-3,1);
					\draw (-3,0) to[in=20, out=20] (-3,2);
					\draw (0, 1) -- (0,2);
					\draw (3, 1) -- (3,2);
					\draw (3, 0) -- (3,1);
					\draw (3,0) to[in=20, out=20] (3,2);
					\draw (6, 1) -- (6,2);
					\draw (6, 0) -- (6,1);
					\draw (6,0) to[in=20, out=20] (6,2);
					%%%%%%%%%%%%
					\draw (-6, 1) -- (-3,2);
					\draw (-6, 1) -- (-3,1);
					\draw (-6, 1) -- (-3,0);
					
					\draw (0, 1) ellipse (1 and 2);
					\draw (0,2) node {$\bullet$};
					\draw (0,1) node {$\bullet$};
					%\draw (0,0) node {$\bullet$};
					\draw (1.3,2.5) node {$A_{2}$};
					%\draw (0,-0.5) node {...};
					
					\draw (-3, 0) -- (0,2);
					\draw (-3, 0) -- (0,1);
					%\draw (-3, 0) -- (0,0);
					\draw (-3, 1) -- (0,2);
					\draw (-3, 1) -- (0,1);
					%\draw (-3, 1) -- (0,0);
					\draw (-3, 2) -- (0,2);
					\draw (-3, 2) -- (0,1);
					%\draw (-3, 2) -- (0,0);
					
					\draw (3, 1) ellipse (1 and 2);
					\draw (3,2) node {$\bullet$};
					\draw (3,1) node {$\bullet$};
					\draw (3,0) node {$\bullet$};
					\draw (4.3,2.5) node {$A_{3}$};
					
					\draw (0, 1) -- (3,2);
					\draw (0, 1) -- (3,1);
					\draw (0, 1) -- (3,0);
					\draw (0, 2) -- (3,2);
					\draw (0, 2) -- (3,1);
					\draw (0, 2) -- (3,0);
					
					\draw (6, 1) ellipse (1 and 2);
					\draw (6,2) node {$\bullet$};
					\draw (6,1) node {$\bullet$};
					\draw (6,0) node {$\bullet$};
					\draw (7.3,2.5) node {$A_{4}$};
					
					\draw (3, 0) -- (6,2);
					\draw (3, 0) -- (6,1);
					\draw (3, 0) -- (6,0);
					\draw (3, 1) -- (6,2);
					\draw (3, 1) -- (6,1);
					\draw (3, 1) -- (6,0);
					\draw (3, 2) -- (6,2);
					\draw (3, 2) -- (6,1);
					\draw (3, 2) -- (6,0);
					
					\draw (8,1) node {...};
				\end{tikzpicture}
			\end{minipage}
			\caption{\em The graph $G_{2}$ when $\vert A_{1}\vert = \vert A_{3}\vert = \vert A_{4}\vert=3$, and $\vert A_{2}\vert=2$.}
		\end{figure}
		
		\begin{flushleft}
			$V_{G_{2}} := \bigcup_{n\in \omega}A_{n}$, 
			
			$E_{G_{2}} := \bigg\{\{x, y\}: n\in\omega\backslash\{0\}, x,y\in A_{n}, x\not=y\bigg\}$
			$\cup \bigg\{\{x,y\}:n\in \omega\backslash\{0\}, x\in A_{n}, y\in A_{n+1}\bigg\}$
			$\cup \bigg\{\{t,x\}:x\in A_{1}\bigg\}$.
		\end{flushleft}
		
		\begin{claim}
			{\em $G_{2}$ has no irreducible proper coloring.}
		\end{claim}
		\begin{proof}
			Let $f: V_{G_{2}} \rightarrow C$ be a $C$-irreducible proper coloring of $G_{2}$, i.e., a map such that $f(x) \neq f(y)$ if $\{x,y\}\in E_{G_{2}}$ and $(\forall c_{1},c_{2}\in C) f^{-1}(c_{1})\cup f^{-1}(c_{2})$ is dependent. Similar to the proof of Theorem 4.2((2)$\Rightarrow$(1)), $f^{-1}(c)$ is finite for all $c \in C$, and $f[\bigcup_{n\in \omega\backslash\{0\}}A_{n}]$ is infinite.
			Fix $c_{0} \in f[\bigcup_{n\in \omega\backslash\{0\}}A_{n}]$. Then $Index(f^{-1}(c_{0})) = \{n \in \omega\backslash \{0\} : f^{-1}(c_{0}) \cap A_{n} \neq \emptyset\}$ is finite. So there exists some
			
			\begin{center}
				$c_{1} \in f[\bigcup_{n\in \omega\backslash \{0\}}A_{n}]\backslash \bigcup_{m\in Index(f^{-1}(c_{0}))} (f[A_{m}]\cup f[A_{m-1}] \cup f[A_{m+1}])$
			\end{center}
			
			as $\bigcup_{m\in Index(f^{-1}(c_{0}))} (f[A_{m}]\cup f[A_{m-1}] \cup f[A_{m+1}])$ is finite. Clearly, $f^{-1}(c_{0})\cup f^{-1}(c_{1})$ is independent, and we obtain a contradiction. 
		\end{proof}
		
		(3)$\Rightarrow$(1) Assume $\mathcal{A}$ as in the proof of (2)$\Rightarrow$(1). 
		Let $G_2^{1}$ be the infinite locally finite connected graph obtained from $G_2$ of (2)$\Rightarrow$(1) after deleting $t$ and $\{\{t,x\}:x\in A_{1}\}$.
		Consider a minimal dominating set $D$ of $G_2^{1}$. The following conditions must be satisfied:
		\begin{enumerate}
			\item[(i)] Since $D$ is a dominating set, for each $n\in \omega\setminus\{0,1\}$, there is an $a\in D$ such that $a\in A_{n-1}\cup A_n \cup A_{n+1}$ (otherwise, no vertices from $A_n$ belongs to $D$ or have a neighbor in $D$).
			\item[(ii)] By the minimality of $D$, we have $|A_n \cap D| \leq 1$ for each $n\in \omega\setminus\{0\}$.
		\end{enumerate}
		Clearly, (i) and (ii) determine a partial choice function over $\mathcal{A}$, contradicting the assumption that $\mathcal{A}$ has no partial choice function.
		
		(4)$\Rightarrow$(1) Let $\mathcal{A}=\{A_{n}: n\in \omega\}$ be a disjoint countably infinite set of non-empty finite sets and let $A=\bigcup_{n\in \omega} A_{n}$. Consider a countably infinite family $(B_{i},<_{i})_{i\in \omega}$ of well-ordered sets such that the following hold (cf. the proof of \cite[Theorem 1, Remark 6]{DM2006}):
		
		\begin{enumerate}[(i)]
			\item  $\vert B_{i}\vert=\vert A_{i}\vert + k$ for some fixed $1\leq k\in \omega$ and thus, there is no mapping with domain $A_{i}$ and range $B_{i}$.
			\item for each $i\in \omega$, $B_{i}$ is disjoint from $A$ and the other $B_{j}$’s.
		\end{enumerate}
		
		Let $B=\bigcup_{i\in \omega} B_{i}$. Pick a countably infinite sequence $T=\{t_{i}:i\in \omega\}$ disjoint from $A$ and $B$ and consider the following infinite locally finite connected graph $G_{3}=(V_{G_{3}}, E_{G_{3}})$:
		
		\begin{figure}[!ht]
			\centering
			\begin{minipage}{\textwidth}
				\centering
				\begin{tikzpicture}[scale=0.6]
					\draw (-2.5, 1) ellipse (2 and 1);
					\draw (-4,1) node {$\bullet$};
					\draw (-2,1) node {$\bullet$};
					\draw (-1.5,1) node {...};
					\draw (-0.3,1.6) node {$A_{0}$};
					\draw (-2.5,3) node {$\bullet$};%t_{1} bullet
					\draw (-2.5,3.5) node {$t_{0}$};
					%%%%
					\draw (-2.5, -1) ellipse (2 and 0.5);
					\draw (-0.5,-0.3) node {$(B_{0},<_{0})$};
					\draw (-4,-1) node {$\bullet$};
					\draw (-2,-1) node {$\bullet$};
					\draw (-1.5,-1) node {...};
					%%%%%%%%%%%%%%%%%%%
					\draw (-4,1) -- (-2.5,3);%a_{1,1} to r_{1}
					\draw (-2,1) -- (-2.5,3);
					%%%%%%%
					\draw (-4,1)-- (-4,-1);%a_{1,1} to b_{1,1}
					\draw (-2,1)-- (-2,-1);
					\draw (-4,1)-- (-2,-1);%a_{1,1} to b_{1,1}
					\draw (-2,1)-- (-4,-1);
					%%%%%%%
					\draw (2.5, 1) ellipse (2 and 1);
					\draw (1,1) node {$\bullet$};
					\draw (3,1) node {$\bullet$};
					%%%%%%%%%
					\draw (3.5,1) node {...};
					\draw (4.7,1.6) node {$A_{1}$};
					\draw (2.5,3) node {$\bullet$};%t_{2} bullet
					\draw (2.5,3.5) node {$t_{1}$};
					%%%%%%
					\draw (2.5, -1) ellipse (2 and 0.5);
					\draw (4.5,-0.3) node {$(B_{1},<_{1})$};
					\draw (1,-1) node {$\bullet$};
					\draw (3,-1) node {$\bullet$};
					\draw (3.5,-1) node {...};
					%%%%%%%%%%%%%%%%%%%
					\draw (3,1) -- (2.5,3);%t_{2} to a_{2,1}
					\draw (1,1) -- (2.5,3);
					%%%%%%%
					\draw (3,1)-- (3,-1);%a_{2,1} to b_{2,1,1}
					\draw (1,1)-- (1,-1);
					\draw (3,1)-- (1,-1);%a_{2,1} to b_{2,1,1}
					\draw (1,1)-- (3,-1);
					%%%%%%%%%%%%%%
					\draw (7.5, 1) ellipse (2 and 1);
					\draw (6,1) node {$\bullet$};
					\draw (8,1) node {$\bullet$};
					\draw (8.5,1) node {...};
					\draw (9.8,1.5) node {$A_{2}$};
					\draw (7.5, -1) ellipse (2 and 0.5);
					\draw (9.5,-0.3) node {$(B_{2},<_{2})$};
					\draw (6,-1) node {$\bullet$};
					\draw (8,-1) node {$\bullet$};
					\draw (8.5,-1) node {...};
					%(t to A_{n} vertices)
					\draw (8,1) -- (7.5,3);%t_{2} to a_{2,1}
					\draw (6,1) -- (7.5,3);
					%t_{3} bullet
					\draw (7.5,3) node {$\bullet$};
					\draw (7.5,3.5) node {$t_{2}$};
					%%%%%%
					\draw (8,1)-- (8,-1);%a_{3,1} to b_{3,1,1}
					\draw (6,1)-- (6,-1);
					\draw (8,1)-- (6,-1);%a_{3,1} to b_{3,1,1}
					\draw (6,1)-- (8,-1);
					%t1 to t2
					\draw (-2.5,3) -- (2.5,3);
					%t1 to t2
					\draw (2.5,3) -- (7.5,3);
					%...after A2
					%\draw (5,1) node {...};
					%\draw (5,-1) node {...};
					%...after A3
					\draw (10,1) node {...};
					\draw (10,-1) node {...};
					%...after t3
					\draw (8,3) node {...};
				\end{tikzpicture}
			\end{minipage}
			\caption{\em Graph $G_{3}$}
		\end{figure}
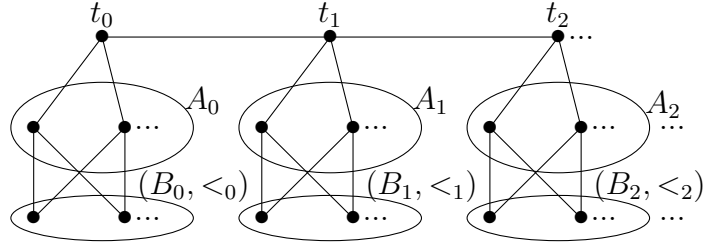
		
		\begin{flushleft}
			$V_{G_{3}} := A\cup B\cup T$, 
			
			$E_{G_{3}} := \bigg\{\{t_{i}, t_{i+1}\}:{i\in\omega}\bigg\} 
			\text{ }\cup  \text{ }
			\bigg\{\{t_{i}, x\}: i\in\omega,x\in A_{i}\bigg\}
			\text{ }\cup \text{ }
			\bigg\{\{x,y\}:i\in \omega, x\in A_{i},y\in B_{i}\bigg\}$.
		\end{flushleft}
		
		By assumption, $G_{3}$ has a minimal edge cover, say $G'_{3}$.
		For each $i \in \omega$, let $f_{i}: B_{i} \rightarrow \mathcal{P}(A_{i})\backslash\{\emptyset\}$ map each vertex of $B_{i}$ to its neighborhood in $G'_{3}$. 
		
		\begin{claim}
			{\em Fix $i\in\omega$. For any two distinct $\epsilon_{1}$ and $\epsilon_{2}$ in $B_{i}$, $\vert f_{i}(\epsilon_{1})\cap f_{i}(\epsilon_{2})\vert \leq 1$. } 
		\end{claim}
		\begin{proof}
			This follows from the fact that $G'_{3}$ does not contain 
			a complete bipartite subgraph $K_{2,2}$. In particular, each component of $G'_{3}$ has at most one vertex of degree greater than $1$.
			If any edge $e\in G'_{3}$ has both of its endpoints incident on edges of $G'_{3} \backslash \{e\}$, then $G'_{3} \backslash \{e\}$ is also an edge
			cover of $G_{3}$, contradicting the minimality of $G'_{3}$.
		\end{proof}
		By Fact 3.1(2) and (i), there are tuples $(\epsilon'_{1}, \epsilon'_{2})\in B_{i}\times B_{i}$ such that $f_{i}(\epsilon'_{1})\cap f_{i}(\epsilon'_{2})\neq \emptyset$. Consider the first such tuple $(\epsilon''_{1},\epsilon''_{2})$ with respect to the lexicographical ordering of $B_{i}\times B_{i}$. Then $\{f_{i}(\epsilon''_{1})\cap f_{i}(\epsilon''_{2}): i\in \omega\}$ is a choice function of $\mathcal{A}$ by claim 5.3.
		
		(5)$\Rightarrow$(1) Assume $\mathcal{A}$, and $A$ as in the proof of (4)$\Rightarrow$(1). Let $R=\{r_{n}:n\in \omega\}$ and $T=\{t_{n}:n\in \omega\}$ be two disjoint countably infinite sequences disjoint from $A$.
		We define the following locally finite connected graph $G_{4}=(V_{G_{4}},E_{G_{4}})$ (see Figure 5):
		
		\begin{figure}[!ht]
			\centering
			\begin{minipage}{\textwidth}
				\centering
				\begin{tikzpicture}[scale=0.5]
					\draw (-2.5, 1) ellipse (2 and 1);
					\draw (-4,1) node {$\bullet$};
					\draw (-3,1) node {$\bullet$};
					\draw (-2,1) node {$\bullet$};
					\draw (-1.5,1) node {...};
					%%%%%%%
					\draw (-2.5,2.6) node {$\bullet$};
					\draw (-2.5,3.1) node {$r_{0}$};
					\draw (-4,1) -- (-2.5,2.6);
					\draw (-3,1) -- (-2.5,2.6);
					\draw (-2,1) -- (-2.5,2.6);
					%%%%%%
					\draw (-0.2,1.6) node {$A_{0}$};
					\draw (-2.5,-1) node {$\bullet$};
					\draw (-2.5,-1.5) node {$t_{0}$};
					\draw (-4,1) -- (-2.5,-1);
					\draw (-3,1) -- (-2.5,-1);
					\draw (-2,1) -- (-2.5,-1);
					\draw (2.5, 1) ellipse (2 and 1);
					\draw (1,1) node {$\bullet$};
					\draw (2,1) node {$\bullet$};
					\draw (3,1) node {$\bullet$};
					\draw (3.5,1) node {...};
					%%%%%%%%
					\draw (2.5,2.6) node {$\bullet$};
					\draw (2.5,3.2) node {$r_{1}$};
					\draw (3,1) -- (2.5,2.6);
					\draw (2,1) -- (2.5,2.6);
					\draw (1,1) -- (2.5,2.6);
					%%%%%%%
					\draw (4.8,1.6) node {$A_{1}$};
					\draw (2.5,-1) node {$\bullet$};
					\draw (2.5,-1.5) node {$t_{1}$};
					\draw (3,1) -- (2.5,-1);
					\draw (2,1) -- (2.5,-1);
					\draw (1,1) -- (2.5,-1);
					\draw (-2.5,-1) -- (2.5,-1);
					\draw (3.5,-1) node {...};
					\draw (5,1) node {...};
				\end{tikzpicture}
			\end{minipage}
			\caption{\em Graph $G_{4}$.}
		\end{figure}
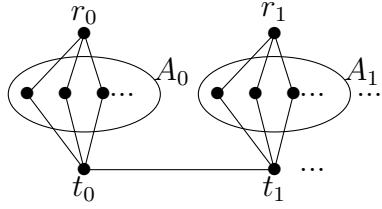
		\begin{flushleft}
			$V_{G_{4}} := (\bigcup_{n\in \omega}A_{n})\cup R\cup T$,
			
			$E_{G_{4}} := \bigg\{\{t_{n}, t_{n+1}\}:{n\in\omega}\bigg\} 
			\text{ }\cup  \text{ }
			\bigg\{\{t_{n}, x\}: n\in\omega,x\in A_{n}\bigg\}
			\text{ }\cup \text{ }
			\bigg\{\{r_{n},x\}:n\in \omega, x\in A_{n}\bigg\}$. 
		\end{flushleft}
		
		Let $M$ be a maximal matching of $G_{4}$.   
		For all $i\in \omega$, there is at most one $x\in A_{i}$ such that $\{r_{i},x\}\in M$ since $M$ is a matching and there is at least one $x\in A_{i}$ such that $\{r_{i},x\}\in M$ since $M$ is maximal. These unique $x\in A_{i}$ determine a choice function for $\mathcal{A}$. 
		
		This concludes the proof of the Theorem.
	\end{proof}
	
	\section{Remarks on new equivalents of AC}
	\begin{remark} We remark that the statement ``Any connected graph has a minimal dominating set" implies $\mathsf{AC}$.\footnote{The authors are very thankful to one of the referees for pointing out to us an error that appeared in this remark in a former version of the paper and especially for guiding us to eliminate the error.}
		Consider a family $\mathcal{A}=\{A_{i}:i\in I\}$ of pairwise disjoint non-empty sets. For each $i\in I$, let $B_i^0= A_{i}\times \{0\}$ and $B_i^1= A_{i}\times \{1\}$. Pick $t \not\in \bigcup_{i\in I} B_i^0\cup \bigcup_{i\in I} B_i^1$ and consider the following connected graph $G_{5}=(V_{G_{5}}, E_{G_{5}})$:
		
		\begin{figure}[!ht]
			\centering
			\begin{minipage}{\textwidth}
				\centering
				\begin{tikzpicture}[scale=0.5]
					\draw (-2.5, 4) ellipse (2 and 1);
					\draw (-4,4) node {$\bullet$};
					\draw (-3,4) node {$\bullet$};
					\draw (-2,4) node {$\bullet$};
					\draw (-1.5,4) node {...};
					\draw (-0.1,5.3) node {$B_0^1$};
					\draw (-4,4) -- (-3,4);
					\draw (-3,4) -- (-2,4);
					\draw (-2,4) to[out=-70,in=-70] (-4,4);
					
					\draw (2.5, 4) ellipse (2 and 1);
					\draw (1,4) node {$\bullet$};
					\draw (2,4) node {$\bullet$};
					\draw (3,4) node {$\bullet$};
					\draw (3.5,4) node {...};
					\draw (4.9,5.3) node {$B_1^1$};
					\draw (1,4) -- (2,4);
					\draw (2,4) -- (3,4);
					\draw (3,4) to[out=-70,in=-70] (1,4);
					
					\draw (7.5, 4) ellipse (2 and 1);
					\draw (6,4) node {$\bullet$};
					\draw (7,4) node {$\bullet$};
					\draw (8,4) node {$\bullet$};
					\draw (8.5,4) node {...};
					\draw (9.9,5.3) node {$B_2^1$};
					\draw (6,4) -- (7,4);
					\draw (7,4) -- (8,4);
					\draw (8,4) to[out=-70,in=-70] (6,4);
					%%%%%%%%%%%%%%%%%%%%
					\draw (-4,4) -- (-3,1);
					\draw (-3,4) -- (-3,1);
					\draw (-2,4) -- (-3,1);
					\draw (-4,4) -- (-4,1);
					\draw (-3,4) -- (-4,1);
					\draw (-2,4) -- (-4,1);
					\draw (-4,4) -- (-2,1);
					\draw (-3,4) -- (-2,1);
					\draw (-2,4) -- (-2,1);
					%%%%%%%%%%%%%%%%%%
					\draw (1,4) -- (1,1);
					\draw (2,4) -- (1,1);
					\draw (3,4) -- (1,1);
					\draw (1,4) -- (2,1);
					\draw (2,4) -- (2,1);
					\draw (3,4) -- (2,1);
					\draw (1,4) -- (3,1);
					\draw (2,4) -- (3,1);
					\draw (3,4) -- (3,1);
					%%%%%%%%%%%%%%%%%
					\draw (6,4) -- (6,1);
					\draw (7,4) -- (6,1);
					\draw (8,4) -- (6,1);
					\draw (6,4) -- (7,1);
					\draw (7,4) -- (7,1);
					\draw (8,4) -- (7,1);
					\draw (6,4) -- (8,1);
					\draw (7,4) -- (8,1);
					\draw (8,4) -- (8,1);
					%%%%%%%%%%%%%%%%%%
					\draw (-2.5, 1) ellipse (2 and 1);
					\draw (-4,1) node {$\bullet$};
					\draw (-3,1) node {$\bullet$};
					\draw (-2,1) node {$\bullet$};
					\draw (-1.5,1) node {...};
					\draw (-0.3,1.6) node {$B_{0}^0$};
					\draw (-4,1) -- (-3,1);
					\draw (-3,1) -- (-2,1);
					\draw (-2,1) to[out=-70,in=-70] (-4,1);
					
					\draw (2.5, 1) ellipse (2 and 1);
					\draw (1,1) node {$\bullet$};
					\draw (2,1) node {$\bullet$};
					\draw (3,1) node {$\bullet$};
					\draw (3.5,1) node {...};
					\draw (4.7,1.6) node {$B_1^0$};
					\draw (1,1) -- (2,1);
					\draw (2,1) -- (3,1);
					\draw (1,1) to[out=-70,in=-70] (3,1);
					
					\draw (7.5, 1) ellipse (2 and 1);
					\draw (6,1) node {$\bullet$};
					\draw (7,1) node {$\bullet$};
					\draw (8,1) node {$\bullet$};
					\draw (8.5,1) node {...};
					\draw (9.7,1.6) node {$B^0_2$};
					\draw (6,1) -- (7,1);
					\draw (7,1) -- (8,1);
					\draw (6,1) to[out=-70,in=-70] (8,1);
					
					\draw (2.5,-1) node {$\bullet$};
					\draw (-4,1) -- (2.5,-1);
					\draw (-3,1) -- (2.5,-1);
					\draw (-2,1) -- (2.5,-1);
					\draw (3,1) -- (2.5,-1);
					\draw (2,1) -- (2.5,-1);
					\draw (1,1) -- (2.5,-1);
					\draw (8,1) -- (2.5,-1);
					\draw (7,1) -- (2.5,-1);
					\draw (6,1) -- (2.5,-1);
					
					\draw (2.5,-1.5) node {$t$};
					%\draw (3.5,-1) node {...};
					\draw (10.5,4) node {...};
					\draw (10,1) node {...};
				\end{tikzpicture}
			\end{minipage}
			\caption{\em  Graph $G_{5}$, a connected graph. If each $A_{i}$ is finite, then $G_{5}$ is rayless.}
		\end{figure}
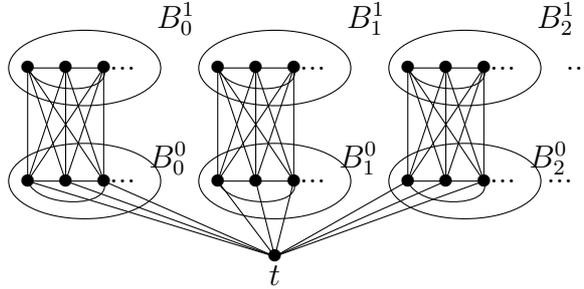
		
		\begin{flushleft}
			$V_{G_{5}} := \{t\} \cup \bigcup_{i\in I} B_i^0\cup \bigcup_{i\in I} B_i^1$,
			
			$E_{G_{5}}:=$
			$\bigg\{\{x,t\}:i\in I, x\in B^0_i\bigg\}\cup \bigg\{\{x,y\}:i\in I,x\in B^0_i, y\in B^1_i\bigg\}$ $\cup \bigg\{\{x,y\}:i\in I, x,y\in B^0_i, x\not=y\bigg\}\cup \bigg\{\{x,y\}:i\in I, x,y\in B^1_i, x\not=y\bigg\}$. 
		\end{flushleft}
		
		Let $D$ be a minimal dominating set of $G_{5}$. Define $M_{i}=(B^0_i\cup B^1_i) \cap D$ for every $i\in I$. 
		We claim that for every $i\in I$, $\vert M_{i}\vert=1$. 
		
		Case (i): If there exists an $i\in I$ such that 
		$M_{i}=\emptyset$, then 
		any member of $B^1_i$ is neither in $D$ nor it has a neighbour in $D$. This contradicts the fact that $D$ is a dominating set of $G_{5}$.
		
		Case (ii): If there exists an $i\in I$ such that $\vert M_{i}\vert \geq 2$, then pick $x,y\in M_{i}$. 
		
		\begin{itemize}
			\item Case (ii(a)): If $x,y\in B^0_i$, or $x,y\in B^1_i$, then $D\backslash\{x\}$ is a dominating set, which contradicts the minimality of $D$. 
			\item Case (ii(b)): If $x\in B^0_i$, and $y\in B^1_i$, then $D\backslash\{y\}$ is a dominating set, which contradicts the minimality of $D$.
			Similarly, we can obtain a contradiction if $y\in B^0_i$, and $x\in B^1_i$.
		\end{itemize}
		Let $M_{i}=\{a_{i}\}$ for every $i\in I$.
		Define,
		\begin{center}
			$g(i) =
			\begin{cases} 
				p^{1}_{i}(a_{i}) & \text{if}\, a_{i}\in B^1_i\cap D, \\
				
				p^{0}_i(a_{i})& \text{if} \, a_{i}\in B^0_i\cap D,
			\end{cases}$
		\end{center}
		
		where for $m\in \{0,1\}$, $p^{m}_{i}: B^m_i\rightarrow A_{i}$ is the projection map to the first coordinate for  each $i\in I$. Then, $g$ is a choice function for $\mathcal{A}$. 
	\end{remark}
	
	\begin{remark}
		The statement ``Any connected bipartite graph has a minimal edge cover" implies $\mathsf{AC}$.
		Assume $\mathcal{A}=\{A_{i}:i\in I\}$ as in the proof of Remark 6.1. Consider a family $\{(B_{i}, <_{i}):i\in I\}$ of well-ordered sets with fixed well-orderings such that for each $i\in I$, $B_{i}$ is disjoint from $A=\bigcup_{i\in I}A_{i}$ and the other $B_{j}$’s, and there is no mapping with domain $A_{i}$ and range $B_{i}$ (see the proofs of \cite[Theorem 1]{DM2006} and Theorem 5.1((4)$\Rightarrow$(1))).
		Let $B=\bigcup_{i\in I}B_{i}$. Then given some $t \not\in B\cup(\bigcup_{i\in I}A_{i})$, consider the following connected bipartite graph $G_{6}=(V_{G_{6}},E_{G_{6}})$:
		
		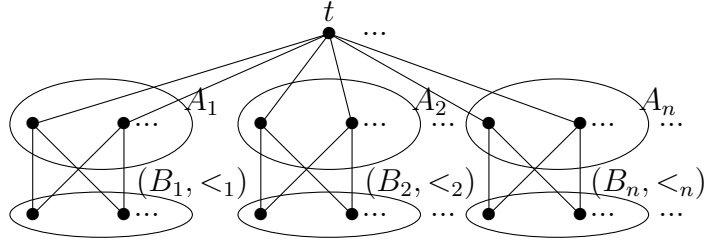
\begin{figure}[!ht]
			\centering
			\begin{minipage}{\textwidth}
				\centering
				\begin{tikzpicture}[scale=0.6]
					\draw (-2.5, 1) ellipse (2 and 1);
					\draw (-4,1) node {$\bullet$};
					\draw (-2,1) node {$\bullet$};
					\draw (-1.5,1) node {...};
					\draw (-0.3,1.5) node {$A_{1}$};
					%%%%
					\draw (-2.5, -1) ellipse (2 and 0.5);
					\draw (-0.5,-0.3) node {$(B_{1},<_{1})$};
					\draw (-4,-1) node {$\bullet$};
					\draw (-2,-1) node {$\bullet$};
					\draw (-1.5,-1) node {...};
					%%%%%%%%%%%%%%%%%%%
					\draw (-4,1) -- (2.5,3);%a_{1,1} to r_{1}
					\draw (-2,1) -- (2.5,3);
					%%%%%%%
					\draw (-2,1)-- (-2,-1);
					\draw (-4,1)-- (-4,-1);
					\draw (-4,1)-- (-2,-1);%a_{1,1} to b_{1,1}
					\draw (-2,1)-- (-4,-1);
					%%%%%%%
					\draw (2.5, 1) ellipse (2 and 1);
					\draw (1,1) node {$\bullet$};
					\draw (3,1) node {$\bullet$};
					%%%%%%%%%
					\draw (3.5,1) node {...};
					\draw (4.7,1.5) node {$A_{2}$};
					\draw (2.5,3) node {$\bullet$};%t_{2} bullet
					\draw (3.5,3) node {...};
					\draw (2.5,3.5) node {$t$};
					%%%%%%
					\draw (2.5, -1) ellipse (2 and 0.5);
					\draw (4.5,-0.3) node {$(B_{2},<_{2})$};
					\draw (1,-1) node {$\bullet$};
					\draw (3,-1) node {$\bullet$};
					\draw (3.5,-1) node {...};
					%%%%%%%%%%%%%%%%%%%
					\draw (3,1) -- (2.5,3);%t_{2} to a_{2,1}
					\draw (1,1) -- (2.5,3);
					%%%%%%
					\draw (3,1)-- (3,-1);%a_{2,1} to b_{2,1,1}
					\draw (1,1)-- (1,-1);
					\draw (3,1)-- (1,-1);%a_{2,1} to b_{2,1,1}
					\draw (1,1)-- (3,-1);
					%%%%%%%%%%%%%%
					\draw (5,1) node {...};
					\draw (5,-1) node {...};
					\draw (10,1) node {...};
					\draw (10,-1) node {...};
					%%%%%%%%(extra)
					\draw (7.5, 1) ellipse (2 and 1);
					\draw (6,1) node {$\bullet$};
					\draw (8,1) node {$\bullet$};
					\draw (8.5,1) node {...};
					\draw (9.7,1.5) node {$A_{n}$};
					\draw (7.5, -1) ellipse (2 and 0.5);
					\draw (9.5,-0.3) node {$(B_{n},<_{n})$};
					\draw (6,-1) node {$\bullet$};
					\draw (8,-1) node {$\bullet$};
					\draw (8.5,-1) node {...};
					%%%%%%(t to A_{n} vertices)
					\draw (8,1) -- (2.5,3);%t_{2} to a_{2,1}
					\draw (6,1) -- (2.5,3);
					%%%%%%
					\draw (8,1)-- (8,-1);%a_{n,1} to b_{n,1,1}
					\draw (6,1)-- (6,-1);
					\draw (8,1)-- (6,-1);%a_{n,1} to b_{n,1,1}
					\draw (6,1)-- (8,-1);
				\end{tikzpicture}
			\end{minipage}
			\caption{\em Graph $G_{6}$, a connected bipartite graph. If each $A_{i}$ is finite, then $G_{6}$ is rayless.}
		\end{figure}
		
		\begin{flushleft}
			$V_{G_{6}} := \{t\} \cup B\cup (\bigcup_{i\in I}A_{i})$,
			$E_{G_{6}}:=\bigg\{\{x,t\}:i\in I, x\in A_{i}\bigg\} \cup \bigg\{\{x,y\}:i\in I, x\in A_{i}, y\in B_{i}\bigg\}$. 
		\end{flushleft}
		
		The rest follows from the arguments of the implication (4)$\Rightarrow$(1) in Theorem 5.1. 
	\end{remark}

	\begin{remark}
		The statement ``Any connected bipartite graph has a maximal matching" implies $\mathsf{AC}$.
		Assume $\mathcal{A}$ as in the proof of Remark 6.1. Pick a sequence $T=\{t_{n}:n\in I\}$ disjoint from $\bigcup_{i\in I}A_{i}$, a $t\not\in \bigcup_{i\in I} A_{i}\cup T$ and consider the following connected bipartite graph $G_{7}=(V_{G_{7}},E_{G_{7}})$: 
		
		\begin{figure}[!ht]
			\centering
			\begin{minipage}{\textwidth}
				\centering
				\begin{tikzpicture}[scale=0.5]
					\draw (-2.5, 1) ellipse (2 and 1);
					\draw (-4,1) node {$\bullet$};
					\draw (-3,1) node {$\bullet$};
					\draw (-2,1) node {$\bullet$};
					\draw (-1.5,1) node {...};
					\draw (-0.3,1.6) node {$A_{0}$};
					\draw (-2.5,-1) node {$\bullet$};
					\draw (-3,-1.3) node {$t_{0}$};
					\draw (-4,1) -- (-2.5,-1);
					\draw (-3,1) -- (-2.5,-1);
					\draw (-2,1) -- (-2.5,-1);
					
					\draw (2.5, 1) ellipse (2 and 1);
					\draw (1,1) node {$\bullet$};
					\draw (2,1) node {$\bullet$};
					\draw (3,1) node {$\bullet$};
					\draw (3.5,1) node {...};
					\draw (4.7,1.6) node {$A_{1}$};
					\draw (2.5,-1) node {$\bullet$};
					\draw (3,-1.3) node {$t_{1}$};
					\draw (3,1) -- (2.5,-1);
					\draw (2,1) -- (2.5,-1);
					\draw (1,1) -- (2.5,-1);
					
					\draw (7.5, 1) ellipse (2 and 1);
					\draw (6,1) node {$\bullet$};
					\draw (7,1) node {$\bullet$};
					\draw (8,1) node {$\bullet$};
					\draw (8.5,1) node {...};
					\draw (9.7,1.6) node {$A_{2}$};
					\draw (7.5,-1) node {$\bullet$};
					\draw (8,-1.3) node {$t_{2}$};
					\draw (8,1) -- (7.5,-1);
					\draw (7,1) -- (7.5,-1);
					\draw (6,1) -- (7.5,-1);
					
					\draw (8.5,-1) node {...};
					\draw (10,1) node {...};
					
					\draw (2.5,-3.2) node {$t$};
					\draw (2.5,-2.8) node {$\bullet$};
					\draw (-2.5,-1) -- (2.5,-2.8);
					\draw (2.5,-1) -- (2.5,-2.8);
					\draw (7.5,-1) -- (2.5,-2.8);
					
				\end{tikzpicture}
			\end{minipage}
			\caption{\em Graph $G_{7}$, a connected rayless bipartite graph.}
		\end{figure}
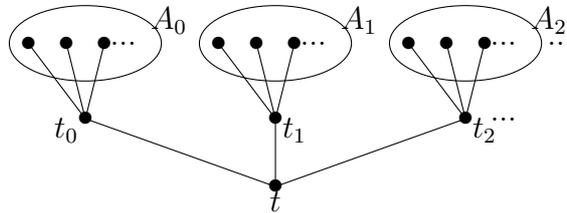
		
		\begin{flushleft}
			$V_{G_{7}}:= \bigcup_{i\in I} A_{i}\cup T \cup \{t\}$,
			$E_{G_{7}}:=\bigg\{\{t_{i},x\}: x\in A_{i}\bigg\}\cup \bigg\{\{t, t_{i}\}: i\in I\bigg\}$.    
		\end{flushleft}
		
		Let $M$ be a maximal matching of $G_{7}$. Clearly, $S=\{i\in I:\{t_{i}, t\} \in M\}$ has at most one element and for each $j\in I\backslash S$, there is exactly one $x\in A_{j}$ (say $x_{j}$) such that $\{x,t_{j}\}\in M$. 
		Let $f(A_{j})=x_{j}$ for each $j\in I\backslash S$.
		If $S\neq\emptyset$, pick any $r\in A_{i}$ if $i\in S$, since selecting an element from a set does not involve any form of choice. Let $f(A_{i})=r$. Clearly, $f$ is a choice function for $\mathcal{A}$. 
	\end{remark}
	
	\begin{thm}{($\mathsf{ZF}$)}{\em The following statements are equivalent:
			\begin{enumerate}
				\item $\mathsf{AC}$
				\item Any connected graph has a minimal dominating set.
				\item Any connected bipartite graph has a maximal matching.
				\item Any connected bipartite graph has a minimal edge cover.
			\end{enumerate} 
		}
	\end{thm}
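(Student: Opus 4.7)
The plan is to observe that three of the four nontrivial implications to $\mathsf{AC}$ have already been established in the preceding remarks and therefore only need to be cited: Remark 6.1 proves $(2)\Rightarrow(1)$, Remark 6.3 proves $(3)\Rightarrow(1)$, and Remark 6.2 proves $(4)\Rightarrow(1)$. In each of those remarks, an appropriate connected (respectively connected bipartite) graph is built on top of an arbitrary family $\mathcal{A}=\{A_i : i\in I\}$ of pairwise disjoint nonempty sets, and the minimality/maximality condition is used to extract exactly one element from each $A_i$, producing a choice function for $\mathcal{A}$.

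What remains is to verify the reverse implications $(1)\Rightarrow(2)$, $(1)\Rightarrow(3)$, and $(1)\Rightarrow(4)$. The plan here is the standard one: $\mathsf{AC}$ implies that every set is well-orderable, so given any connected graph $G=(V_G,E_G)$, we obtain a well-ordering of $V_G$ and hence of $E_G\subseteq [V_G]^2$. We then invoke Proposition 3.3 directly. Item (2) of Proposition 3.3 gives a minimal dominating set in $G$, which handles $(1)\Rightarrow(2)$; item (3) gives a maximal matching, handling $(1)\Rightarrow(3)$; and item (4) gives a minimal edge cover provided $G$ has no isolated vertex, handling $(1)\Rightarrow(4)$.

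The only small check concerns the hypothesis ``no isolated vertex'' in Proposition 3.3(4). For a connected bipartite graph $G$ with at least two vertices, connectedness forces every vertex to lie on an edge, so $G$ automatically has no isolated vertex; the one-vertex case is degenerate (no edge cover can exist), and can be handled by the convention that a connected graph with an edge cover has at least one edge. Thus $(1)\Rightarrow(4)$ follows. Since the forward implications are just references to the remarks and the reverse implications reduce to Proposition 3.3 via well-ordering, there is no serious obstacle; the proof is an assembly argument. The cleanest presentation is to state the equivalence as a cycle: first note $(1)$ implies each of $(2),(3),(4)$ by the above, then cite Remarks 6.1, 6.2, 6.3 to close the loop.
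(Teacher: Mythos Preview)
Your proposal is correct and follows exactly the same approach as the paper's own proof: the paper also cites Proposition 3.3 for the forward implications $(1)\Rightarrow(2)$--$(4)$ and Remarks 6.1, 6.2, 6.3 for the reverse implications. Your additional explicit check that a connected bipartite graph (with at least two vertices) has no isolated vertex is a helpful clarification the paper leaves implicit.
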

	
	\begin{proof}
		Implications (1)$\Rightarrow$(2)-(4) are straightforward (cf. Proposition 3.3). The other directions follow from Remarks 6.1, 6.2, and 6.3. 
	\end{proof}  
	
	\begin{remark}
		The locally finite connected graphs forbid those graphs that contain vertices of infinite degrees but may contain rays. There is another class of connected graphs that forbid rays but may contain vertices of infinite degrees. For a study of some properties of the class of rayless connected graphs, the reader is referred to Halin \cite{Hal1998}. 
		
		(1). We can see that the statement ``Every connected rayless graph has a minimal dominating set" implies $\mathsf{AC_{fin}}$. Consider a non-empty family $\mathcal{A}=\{A_{i}:i\in I\}$ of pairwise disjoint finite sets and the graph $G_{5}$ from Remark 6.1. Clearly, $G_{5}$ is connected and rayless. The rest follows by the arguments of Remark 6.1.
		
		(2). By applying
		Remark 6.3 and Proposition 3.3, we can see that the statement ``Every connected rayless graph has a maximal matching" is equivalent to $\mathsf{AC}$.
		
		(3). The statement ``Every connected rayless graph has a minimal edge cover" implies $\mathsf{AC_{fin}}$. Let $\mathcal{A}=\{A_{i}:i\in I\}$ be as in (1) and $G_{6}$ be the graph from Remark 6.2. Then $G_{6}$ is connected and rayless. By the arguments of Remark 6.2, the rest follows. 
	\end{remark} 
	
	\section{Questions}
	\begin{question}
		Do the following statements imply $\mathsf{AC}$ (without assuming that the sets of colors can be well-ordered)?
		\begin{enumerate}
			\item Any graph has a chromatic index.
			\item Any graph has a distinguishing number.
			\item Any graph without a component isomorphic to $K_1$ or $K_2$ has a distinguishing index.
		\end{enumerate}
		Stawiski \cite[Theorem 3.8]{Sta2023} proved that the statements (1)--(3) mentioned above are equivalent to $\mathsf{AC}$ by assuming that the 
		sets of colors can be well-ordered.
	\end{question} 
	
	\section{Acknowledgement} The authors are very thankful to the three anonymous referees for reading the manuscript in detail and for providing several comments
	and suggestions that improved the quality and the exposition of the paper. 
	%The authors are very thankful to one of the referees for pointing out to us an error that appeared in Remark 6.1 in a former version of the paper.
	%%%%%%%%%%%%%%%%%%%%%%%%%%%%%
	
\end{document}